\documentclass[11pt]{article}
\usepackage[hscale=.7, vscale=.75]{geometry}
\usepackage[noblocks]{authblk}
\usepackage{xspace,amsmath,amsfonts,amssymb,bbm,ifthen,mathtools}
\usepackage[standard, thmmarks, amsmath]{ntheorem}
\usepackage[heavycircles]{stmaryrd}
\usepackage{bbold}
\usepackage[sort, noadjust]{cite}
\usepackage{wasysym}
\usepackage[hidelinks]{hyperref}
\usepackage{breakurl}

\newcommand*\qedhere{}

\usepackage{tikz}
\usetikzlibrary{arrows}
\tikzset{->, auto, >=latex', label distance=-.25cm, shorten <=-3pt,
  shorten >=-3pt}
\tikzset{every node={font=\footnotesize}}

\usepackage[theme=default-plain, charsperline=80]{jlcode}
\let\oldjlinl\jlinl
\def\jlinl#1{\textup{\oldjlinl{#1}}}

\usepackage{float}
\floatstyle{plaintop}
\newfloat{lstfloat}{htbp}{lop}
\floatname{lstfloat}{Listing}


\newcommand*\glue{\mathbin{\triangleright}}
\newcommand*\N{{\textup{\textsf{N}}}}
\newcommand*\twotwo{{\textup{\textsf{2+2}}}}
\newcommand*\intpt{\tiny {\ensuremath \Circle}}
\newcommand*\liff{\Leftrightarrow}
\newcommand*\PP{\textup{\textsf{P}}}
\newcommand*\SP{\textup{\textsf{SP}}}
\newcommand*\GP{\textup{\textsf{GP}}}
\newcommand*\IP{\textup{\textsf{IP}}}
\newcommand*\ICI{\textup{\textsf{ICI}}}
\newcommand*\GPI{\textup{\textsf{GPI}}}
\newcommand*\cat[1]{\text{\textup{\textsf{#1}}}}
\newcommand*\iPos{\cat{iPos}}
\newcommand*\id{\textup{\textsf{id}}}
\newcommand*\src{\textit{src}}
\newcommand*\tgt{\textit{tgt}}
\newcommand*\Nat{\mathbbm{N}}
\newcommand*\mcal[1]{\mathcal{#1}}
\newcommand*\from{\leftarrow}
\newcommand*\bigmid{\mathrel{\big|}}
\newcommand*\indeg{\textup{\textsf{ideg}}}
\newcommand*\outdeg{\textup{\textsf{odeg}}}
\newcommand*\inhash{\textup{\textsf{ihash}}}
\newcommand*\outhash{\textup{\textsf{ohash}}}
\newcommand*\NN{{\textup{\textsf{N\hspace{-.65ex}N}}}}
\newcommand*\NPLUS{{\textup{\textsf{M}}}}
\newcommand*\NMINUS{{\textup{\textsf{W}}}}
\newcommand*\TC{{\textup{\textsf{3C}}}}
\newcommand*\LN{{\textup{\textsf{LN}}}}
\newcommand*\cf{\textit{cf.}\xspace}

\DeclareRobustCommand{\single}[1]{%
  \ifthenelse{\equal{#1}{4}}{%
    \mcal{S}%
  }{%
    \ifthenelse{\equal{#1}{1}}{%
      \mcal T%
    }{%
      \errorUndefinedArgument%
    }}}

\title{Generating Posets with Interfaces}

\author{Olavi \"{A}ik\"{a}s}%
\affil{{\'E}cole polytechnique, Palaiseau, France\thanks{Bachelor
    internship at École polytechnique in 2021; no current
    affiliation}}%
\author{Uli Fahrenberg}%
\affil{EPITA Research and Development Laboratory (LRDE),
  France\thanks{Most of this work conducted while author employed at
    École polytechnique}}%
\author{Christian Johansen}%
\affil{Norwegian University of Science and Technology (NTNU), Norway}%
\author{Krzysztof Ziemia\'{n}ski}%
\affil{University of Warsaw, Poland}%

\begin{document}

\maketitle

\begin{abstract}
  We generate and count isomorphism classes of gluing-parallel posets
  with interfaces (iposets) on up to eight points, and on up to ten
  points with interfaces removed.  In order to do so, we introduce a
  new class of iposets with full interfaces and show that considering
  these is sufficient.  We also describe the software (written in
  Julia) that we have used for our exploration and define a new
  incomplete isomorphism invariant which may be computed in polynomial
  time yet identifies only very few pairs of non-isomorphic iposets.
\end{abstract}

\section{Introduction}

In concurrency theory, partially ordered sets (posets) are used to
model executions of programs which exhibit both sequentiality and
concurrency of events \cite{DBLP:journals/ipl/Winkowski77,
  Pratt86pomsets, DBLP:books/sp/Vogler92}.  Series-parallel posets
have been investigated due to their algebraic malleability---they are
freely generated by serial and parallel composition
\cite{DBLP:journals/fuin/Grabowski81,
  DBLP:journals/tcs/Gischer88}---and form a model of concurrent Kleene
algebra \cite{DBLP:journals/jlp/HoareMSW11}.  Interval orders are
another class of posets that arise naturally in the semantics of Petri
nets \cite{DBLP:books/sp/Vogler92, DBLP:journals/tcs/JanickiK93,
  DBLP:journals/iandc/JanickiY17}, higher-dimensional automata
\cite{DBLP:journals/tcs/Glabbeek06, Hdalang}, and distributed systems
\cite{DBLP:journals/cacm/Lamport78}.  Series-parallel posets and
interval orders are incomparable.

This paper continues work begun in
\cite{DBLP:conf/RelMiCS/FahrenbergJST20} to consolidate
series-parallel posets and interval orders.  To this end, we have
equipped posets with interfaces and extended the serial composition to
an operation which glues posets along their interfaces.  We have
investigated the algebraic structure of the so-defined gluing-parallel
posets, which encompass both series-parallel posets and interval
orders, in \cite{DBLP:conf/RelMiCS/FahrenbergJST20, BeyondN2}.  Here
we concern ourselves with the combinatorial properties of this class.

An iposet is a poset with interfaces.  We generate (and count) all
isomorphism classes of iposets and of gluing-parallel iposets on up to
8 points, and of gluing-parallel posets (with interfaces removed) on
up to 10 points.  In order to do so, we introduce a new subclass of
iposets with full interfaces and then generate all isomorphism classes
of such ``\!\emph{Winkowski}'' iposets on up to 8 points and of
gluing-parallel Winkowski iposets on up to 9 points.

We have found eleven forbidden substructures for gluing-parallel
(i)posets, five on 6 points, one on 8 points, and five other on 10
points.  We currently do not know whether there are any forbidden
substructures on 11 points or more.

To conduct our exploration we have written software in Julia, using
the LightGraphs package.  We use a recursive algorithm to generate
iposets and Julia's built-in threading support for parallelization.
For isomorphism checking we use a new incomplete invariant which may
be computed in linear time yet identifies only relatively few pairs of
non-isomorphic (i)posets.  We also detail the software and process
used to find forbidden substructures.  Our software and generated data
are freely available; McKay's similarly freely available data has been
of great help in our work.

After a preliminary Section \ref{se:posets} on posets we introduce
interfaces in Section \ref{se:iposets}.  Section \ref{se:software}
then reports on our software and Section \ref{se:forbidden} on
forbidden substructures.  Before we then can examine Winkowski iposets
in Section \ref{se:wink} we need to concern ourselves with discrete
iposets in Section \ref{se:discrete}.

We expose the numbers of non-isomorphic (i)posets in various classes
throughout the paper.  Table \ref{ta:spio} shows the numbers of posets,
series-parallel posets, interval orders, the union of the latter two
classes, and series-parallel interval orders.  Table \ref{ta:gpi}
counts iposets and gluing-parallel (i)posets, Table \ref{ta:discrete}
shows the numbers of some subclasses of discrete iposets, and Table
\ref{ta:wink} exposes the numbers of Winkowski and gluing-parallel
Winkowski iposets.  The appendix contains the counts of iposets and
gluing-parallel iposets, and of their Winkowski subclasses, split by
the numbers of sources and targets.

The main contributions of this paper are, especially when compared to
\cite{BeyondN2}, the exposition of the new subclass of Winkowski
iposets and the showcase of Julia as a programming language for
combinatorial exploration.  Further, we believe that our new
incomplete isomorphism invariant may be useful also in other contexts,
but this remains to be explored.

\section{Posets}
\label{se:posets}

A poset $(P,\mathord<)$ is a finite set $P$ equipped with an
irreflexive transitive binary relation $<$ (asymmetry of $<$ follows).
We use Hasse diagrams to visualize posets, but put greater elements to
the right of smaller ones.  Posets are equipped with a serial and a
parallel composition.  They are based on the disjoint union
(coproduct) of sets, which we write
$X\sqcup Y = \{(x,1)\mid x\in X\}\cup \{(y,2)\mid y \in Y\}$.

\begin{definition}
  Let $(P_1, <_1)$ and $(P_2, <_2)$ be posets.
  \begin{enumerate}
  \item The \emph{parallel composition} $P_1\otimes P_2$ is the
    coproduct with $P_1\sqcup P_2$ as carrier set and order defined as
    \begin{equation*}
      (p,i)<(q,j) \,\Leftrightarrow\, i=j \land p<_i q,\qquad
      i,j\in\{1,2\}.
    \end{equation*}
  \item The \emph{serial composition} $P_1\glue P_2$ is the ordinal
    sum, which again has the disjoint union as carrier set, but order
    defined as
    \begin{equation*}
      (p,i)<(q,j) \Leftrightarrow (i=j \land p<_i q) \lor i<j, \qquad
      i,j\in\{1,2\}.
    \end{equation*}
  \end{enumerate}
\end{definition}

A poset is \emph{series-parallel} (an \emph{sp-poset}) if it is either
empty or can be obtained from the singleton poset by finitely many
serial and parallel compositions.  It is well known
\cite{DBLP:journals/siamcomp/ValdesTL82,
  DBLP:journals/fuin/Grabowski81} that a poset is series-parallel if
and only if it does not contain the induced subposet
\begin{equation*}
  \N=\! \vcenter{\hbox{%
      \begin{tikzpicture}[y=.5cm]
        \node (0) at (0,0) {\intpt};
        \node (1) at (0,-1) {\intpt};
        \node (2) at (1,0) {\intpt};
        \node (3) at (1,-1) {\intpt};
        \path (0) edge (2) (1) edge (2) (1) edge (3);
      \end{tikzpicture}}}.
\end{equation*}
Further, generation of sp-posets is free: they form the free algebra
in the variety of double monoids.

An \emph{interval order} \cite{journals/mpsy/Fishburn70} is a
relational structure $(P,<)$ with $<$ irreflexive such that $w< y$ and
$x< z$ imply $w< z$ or $x< y$, for all $w,x,y,z\in P$.  Transitivity
of $<$ follows, and interval orders are therefore posets.  Interval
orders are precisely those posets that do not contain the induced
subposet
\begin{equation*}
  \twotwo=\! \vcenter{\hbox{%
      \begin{tikzpicture}[y=.5cm]
        \node (0) at (0,0) {\intpt};
        \node (1) at (0,-1) {\intpt};
        \node (2) at (1,0) {\intpt};
        \node (3) at (1,-1) {\intpt};
        \path (0) edge (2) (1) edge (3);
      \end{tikzpicture}}}.
\end{equation*}

Concurrency theory employs both sp-posets and interval orders (and
their labeled variants), the former for their algebraic malleability
and the latter because the precedence order of events in distributed
systems typically is an interval order.  We are interested in classes of posets
which retain the pleasurable algebraic properties of sp-posets but
include interval orders.

Posets $(P_1,\mathord<_1)$ and $(P_2,\mathord<_2)$ are isomorphic if
there is a bijection $f:P_1\to P_2$ such that for all $x,y\in P_1$,
$x<_1 y \liff f(x)<_2 f(y)$.  It is well-known (and easy to see, given
that posets are isomorphic if and only if their Hasse diagrams are)
that the poset isomorphism problem is just as hard as graph
isomorphism.  State of the art is Brinkmann and McKay
\cite{DBLP:journals/order/BrinkmannM02} which reports generating
isomorphism classes of posets on up to sixteen points.

Also \emph{counting} posets up to isomorphism is difficult and has
been achieved up to sixteen points.  On the other hand, both sp-posets
and interval orders admit generating functions, so counting these is
trivial.  Table \ref{ta:spio} shows the numbers of posets, sp-posets
and interval orders on $n$ points up to isomorphism for $n\le 11$, as
well as the numbers of posets which are sp-or-interval and those which
are series-parallel compositions of interval orders.

\begin{table}[tb]
  \centering
  \caption{Different types of posets on $n$ points: all posets;
    sp-posets; interval orders; sp or interval; sp-interval orders.}
  \label{ta:spio}
  \smallskip
  \begin{tabular}{r|rrrrr}
    $n$ & $\PP(n)$ & $\SP(n)$ & $\textsf{IO}(n)$ &
    $\textsf{SP\texttt+IO}(n)$ & $\textsf{SPIO}(n)$ \\\hline
    0 & 1 & 1 & 1 & 1 & 1 \\
    1 & 1 & 1 & 1 & 1 & 1 \\
    2 & 2 & 2 & 2 & 2 & 2 \\
    3 & 5 & 5 & 5 & 5 & 5 \\
    4 & 16 & 15 & 15 & 16 & 16 \\
    5 & 63 & 48 & 53 & 59 & 59 \\
    6 & 318 & 167 & 217 & 252 & 253 \\
    7 & 2045 & 602 & 1014 & 1187 & 1203 \\
    8 & 16.999 & 2256 & 5335 & 6161 & 6327 \\
    9 & 183.231 & 8660 & 31.240 & 35.038 & 36.449 \\
    10 & 2.567.284 & 33.958 & 201.608 & 218.770 & 229.660 \\
    11 & 46.749.427 & 135.292 & 1.422.074 \\[1ex]
    EIS & 112 & 3430 & 22493 & & 
  \end{tabular}
\end{table}

\section{Posets with Interfaces}
\label{se:iposets}

Let $[n]=\{1,\dotsc,n\}$ for $n\ge 1$ and $[0]=\emptyset$.  We write
$P^{\min}$ for the set of minimal and $P^{\max}$ for the set of
maximal elements of poset $P$.

\begin{definition}
  A \emph{poset with interfaces} (\emph{iposet}) is a poset $P$
  together with two injective functions
  \begin{equation*}
    [n]\overset{s}{\longrightarrow} P\overset{t}{\longleftarrow}[m]
  \end{equation*}
  such that the images $s([n])\subseteq P^{\min}$ and
  $t([m])\subseteq P^{\max}$.
\end{definition}

An iposet as above is denoted $(s,P,t):n\to m$.  We let $\iPos$ be the
set of iposets and define the identity iposets
$\id_n=(\id_{[n]}, [n], \id_{[n]}):n\to n$ for $n\ge 0$.  For
notational convenience we also define \emph{source} and \emph{target}
functions $\src, \tgt: \iPos\to \Nat$ which map $P:n\to m$ to
$\src(P)=n$ and $\tgt(P)=m$.  Any \emph{poset} $P$ is an iposet with
trivial interfaces, $\src(P)=\tgt(P)=0$.

Iposets $(s_1,P_1,t_1):n_1\to m_1$ and $(s_2,P_2,t_2):n_2\to m_2$ are
\emph{isomorphic} if there is a poset isomorphism $f:P_1\to P_2$ such
that $f\circ s_1= s_2$ and $f\circ t_1= t_2$; this implies $n_1=n_2$
and $m_1=m_2$.  The mappings $\src$ and $\tgt$ are invariant under
isomorphisms.

We extend the serial and parallel compositions to iposets.  Below,
$\phi_{ n, m}:[ n+ m]\to[ n]\otimes[ m]$ are the isomorphisms given by
\begin{equation*}
  \phi_{ n, m}( i)=
  \begin{cases}
    (i,1) &\text{if } i\le n, \\
    (i-n,2) &\text{if } i> n,
  \end{cases}
\end{equation*}
and $(P_1\sqcup P_2)_{/t_1\equiv s_2}$ denotes the quotient of the
disjoint union obtained by identifying $(t_1(k),1)$ with $(s_2(k),2)$
for every $k\in[m]$.

\begin{definition}
  Let $(s_1,P_1,t_1):n_1\to m_1$ and $(s_2,P_2 ,t_2):n_2\to m_2$ be
  iposets.
  \begin{enumerate}
  \item Their \emph{parallel composition} is the iposet
    $(s,P_1\otimes P_2,t):n_1+n_2\to m_1+m_2$ with
    $s=(s_1\otimes s_2)\circ \phi_{ n_1, n_2}$ and
    $t=(t_1\otimes t_2)\circ \phi_{ m_1, m_2}$.
  \item For $m_1=n_2$, their \emph{gluing composition} is the iposet
    $(s_1,P_1\glue P_2,t_2):n_1\to m_2$ with carrier set
    $(P_1\sqcup P_2)_{/t_1\equiv s_2}$ and order defined as
    \begin{equation*}
      (p,i)<(q,j) \Leftrightarrow (i=j \land p<_i q) \lor (i<j \land
      p\notin t_1[m_1] \land q\notin s_2[n_2]).
    \end{equation*}
  \end{enumerate}
\end{definition}

Thus $P_1\glue P_2$ is defined precisely if $\tgt(P_1)=\src(P_2)$, and
in that case, $\src(P_1\glue P_2)=\src(P_1)$ and
$\tgt(P_1\glue P_2)=\tgt(P_2)$.  Isomorphism classes of iposets form
the morphisms in a category with objects the natural numbers and
gluing as composition, or equivalently, a \emph{local partial
  $\ell r$-semigroup} \cite{DBLP:conf/RelMiCS/CalkFJSZ21, Chantale}.
For the parallel composition,
$\src(P_1\otimes P_2)= \src(P_1)\otimes \src(P_2)$ and
$\tgt(P_1\otimes P_2)=\tgt(P_1)\otimes \tgt(P_2)$, extending $\iPos$
to a \emph{partial interchange monoid} \cite{CranchDS20}.

\begin{remark}[Interchange]
  \label{re:interchange}
  The equation
  $(P_1\otimes P_2)\glue(Q_1\otimes Q_2)=(P_1\glue
  Q_1)\otimes(P_2\glue Q_2)$ does \emph{not} hold in general, not even
  up to isomorphism \cite{BeyondN2}; but see Lemma
  \ref{le:interchange} below for a special case.
\end{remark}

A composition $P=P_1\glue P_2$ or $P=P_1\otimes P_2$ is \emph{trivial}
if $P=P_1$ or $P=P_2$ as posets; see also Lemma \ref{le:trivialglue}
below. As before, we are interested in iposets
and posets which can be obtained from elementary iposets by finitely
many (nontrivial) gluing and parallel compositions.  Let
\begin{equation*}
  \single4 = \big\{ [0]\to [1]\from [0], \quad [0]\to [1]\from [1],
  \quad [1]\to [1]\from [0], \quad [1]\to [1]\from [1] \big\}
\end{equation*}
be the set of iposets on the singleton poset (the source and target
maps are uniquely determined by their type here).

\begin{definition}
  An iposet is \emph{gluing-parallel} (a \emph{gp-iposet}) if it is
  empty or can be obtained from elements of $\single4$ by finitely
  many applications of $\glue$ and $\otimes$.
\end{definition}

We are interested in generating and counting iposets and gp-iposets up
to isomorphism, but also in doing so for gluing-parallel
\emph{posets}: those posets which are gp-as-iposets.  The following
property defines a class in-between iposets and gp-iposets.

\begin{definition}
  Iposet $(s,P,t):n\to m$ is \emph{interface consistent} if
  $s^{-1}(x)<s^{-1}(y) \liff t^{-1}(x)<t^{-1}(y)$ for all
  $x,y\in s([n])\cap t([m])$.
\end{definition}

Here $<$ is the (implicit) natural ordering on $[n]$ and $[m]$.  It is
clear that gluing and parallel compositions of interface consistent
iposets are again interface consistent, hence any gp-iposet is
interface consistent.  Table \ref{ta:gpi} shows the numbers of
posets, sp-posets and gp-posets up to isomorphism, as well as of
iposets, interface consistent iposets, and gp-iposets.  In the
appendix, Tables \ref{ta:ip123split} to \ref{ta:ip8split} show the
numbers of the three classes of iposets split by the numbers of their
sources and targets.

\begin{table}[tb]
  \centering
  \caption{Different types of posets and iposets on $n$ points: all
    posets; sp-posets; gp-posets; iposets; interface consistent
    iposets; gp-iposets}
  \label{ta:gpi}
  \smallskip
  \begin{tabular}{r|rrrrrr}
    $n$ & $\PP(n)$ & $\SP(n)$ & $\GP(n)$ & $\IP(n)$ & $\ICI(n)$ &
    $\GPI(n)$ \\\hline
    0 & 1 & 1 & 1 & 1 & 1 & 1 \\
    1 & 1 & 1 & 1 & 4 & 4 & 4 \\
    2 & 2 & 2 & 2 & 17 & 16 & 16 \\
    3 & 5 & 5 & 5 & 86 & 74 & 74 \\
    4 & 16 & 15 & 16 & 532 & 420 & 419 \\
    5 & 63 & 48 & 63 & 4068 & 3030 & 2980 \\
    6 & 318 & 167 & 313 & 38.933 & 28.495 & 26.566 \\
    7 & 2045 & 602 & 1903 & 474.822 & 355.263 & 289.279 \\
    8 & 16.999 & 2256 & 13.943 & 7.558.620 & 5.937.237 & 3.726.311 \\
    9 & 183.231 & 8660 & 120.442 & \\
    10 & 2.567.284 & 33.958 & 1.206.459 & \\
    11 & 46.749.427 & 135.292 & & \\[1ex]
    EIS & 112 & 3430 & 345673 & 331158 & & 331159
  \end{tabular}
\end{table}

\section{Software}
\label{se:software}

Before we continue with our exploration, we describe the software we
have used to generate most numbers in the tables contained in this
paper and to explore the structural properties of
(i)posets.\footnote{%
  Our software is available at
  \url{https://github.com/ulifahrenberg/pomsetproject/tree/main/code/20220303/},
  and the data at
  \url{https://github.com/ulifahrenberg/pomsetproject/tree/main/data/}.}
This started as a piece of Python code written to confirm or disprove
some conjectures, but did not allow us to compute the $\GP(n)$ and
$\GPI(n)$ sequences beyond $n=6$.  During the BSc internship of the
first author of this paper, it was converted to Julia, using the
LightGraphs package \cite{Graphs21}.  This conversion, and major
improvements in candidate generation and isomorphism checking (see
below), allowed us to generate all gp-posets on $n=9$ points and all
gp-iposets on $n=7$ points.  Afterwards we managed to compute
$\GP(10)$ and $\GPI(8)$ using massively parallelized
computations.\footnote{%
  \label{fn:norway}
  We have used several ARM based servers running Linux with processor
  from High Silicon, model Hi1616, 64 cores, 256 GiB memory, 36 TiB of
  local disk, provided by the University of Oslo HPC services
  \url{https://www.uio.no/english/services/it/research/hpc/}.}

Let $G_n$ denote the set of isomorphism classes of gp-iposets on $n$
points for $n\ge 0$ and
\begin{equation*}
  G_n(k, \ell) = \big\{ P\in G_n \bigmid \src(P)=k, \tgt(P)=\ell
  \big\}
\end{equation*}
for $k,\ell\in\{0,\dotsc,n\}$.  That is, $G_n(k,\ell)$ is the set of
iposets on $n$ points with $k$ points in the starting interface and
$\ell$ in the terminating interface.  Then
$G_n=\bigcup_{k, \ell} G_n(k, \ell)$, $\GPI(n)=|G_n|$, and
$\GP(n)=|G_n(0,0)|$.  Our algorithm for generating $G_n$ is recursive
and based on the following property, where we have extended $\glue$
and $\otimes$ to sets of iposets the usual way.

\begin{lemma}
  \label{le:gp-rec}
  For all $n>1$ and $0\le k,\ell\le n$,
  \begin{equation*}
    G_n(k, \ell) =%
    \bigcup_{\substack{1\le p, q<n \\ m=p+q-n \\ 0\le m<p \\ 0\le m<q}}%
    G_p(k, m)\glue G_q(m, \ell) \cup%
    \bigcup_{\substack{p+q=n \\ p, q\ge 1 \\ k_1+k_2=k \\ \ell_1+\ell_2=\ell}}%
    G_p(k_1, \ell_1)\otimes G_q(k_2, \ell_2).
  \end{equation*}
\end{lemma}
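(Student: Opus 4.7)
The plan is to prove the two set inclusions separately.

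For the $(\supseteq)$ direction, I would take an iposet $P$ in the right-hand side. Each such $P$ is a gluing or parallel composition of two gp-iposets, so it is itself gp by the inductive definition of gp-iposets. The arithmetic side conditions are exactly what is needed for cardinalities and interfaces to match: in the gluing term, $m=p+q-n$ points get identified, so $|P_1\glue P_2|=p+q-m=n$, and the source $k$ is inherited from $P_1$ while the target $\ell$ comes from $P_2$; in the parallel term, $|P_1\otimes P_2|=p+q=n$, with source $k_1+k_2=k$ and target $\ell_1+\ell_2=\ell$.

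For the $(\subseteq)$ direction, take $P\in G_n(k,\ell)$. By the definition of gp-iposet, $P$ is obtainable from $\single4$ by finitely many applications of $\glue$ and $\otimes$; fix such an expression and let $T$ be its syntax tree. Since $|P|=n>1$, the tree $T$ is not a single leaf, so the root supplies a decomposition $P=P_1\glue P_2$ or $P=P_1\otimes P_2$ with $P_1,P_2$ gp-iposets on strictly smaller carriers. I would then proceed by induction on $|T|$. For a parallel root with $p=|P_1|$ and $q=|P_2|$ both at least $1$, the decomposition lies in the parallel union. For a gluing root with $m=\tgt(P_1)=\src(P_2)$ satisfying $m<p$ and $m<q$, it lies in the gluing union. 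Otherwise the root composition is size-trivial: either a parallel composition with an empty factor, in which case $P$ equals the nonempty factor and the inductive hypothesis applies to its strictly smaller subtree; or a gluing with $m=p$ (symmetrically $m=q$), in which case $P_1$ is a discrete iposet whose carrier coincides with its target interface, and the gluing reduces on carriers to $P_2$ while possibly permuting its source interface.

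The main obstacle is this last, degenerate gluing case. As iposets, $P$ and $P_2$ generally differ by a nontrivial permutation and restriction of the source interface, so one cannot simply replace the top node by $P_2$ itself. The plan is to push the source reindexing inside $P_2$'s expression tree: if the root of that tree is a parallel composition, the reindexing merely reorders (or forgets) parallel factors; if it is a gluing, associativity of $\glue$ pushes the reindexing one level deeper, yielding a similar degenerate gluing on a strictly smaller subproblem to which induction applies. A careful formulation, perhaps as an auxiliary absorption lemma for discrete ``permutation'' iposets, closes the induction. The remaining verifications that the resulting compositions lie in the correct cells $G_p(\cdot,\cdot)$ and $G_q(\cdot,\cdot)$ are routine bookkeeping.
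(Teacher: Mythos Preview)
Your two-inclusion strategy matches the paper's. The $(\supseteq)$ direction is indeed routine, and for $(\subseteq)$ the paper, like you, takes a gp-iposet $R$ on $n>1$ points and exhibits it as a composition of strictly smaller gp-iposets. Where you diverge is in rigor: the paper simply asserts that one ``can assume'' the top composition is nontrivial and leaves it at that, whereas you correctly isolate the degenerate gluing case (where $P_1$ is a starter or $P_2$ a terminator) as the place where work is needed and propose an induction on the syntax tree to absorb the discrete factor. That is the right instinct.

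There is, however, a genuine gap in your outline. In the subcase where $P_2=Q_1\otimes Q_2$ and $P_1$ is a starter, you write that the reindexing ``merely reorders (or forgets) parallel factors''. For an \emph{arbitrary} starter this is false: if $P_1$ is the swap symmetry on $[2]$ and $Q_1$, $Q_2$ are non-isomorphic chains with full interfaces, then $P_1\glue(Q_1\otimes Q_2)$ is not isomorphic to any parallel composition, because the source and target interfaces cross the two components in incompatible ways. What rescues the argument is that $P_1$, being gluing-parallel, is interface consistent, so $t_1^{-1}\circ s_1:[k]\to[m]$ is order-preserving. Then the preimages of $[m_1]$ and of $\{m_1+1,\dots,m\}$ form an initial and a final segment of $[k]$, the starter splits as $P_1\cong P_1'\otimes P_1''$ with $\tgt(P_1')=\src(Q_1)$ and $\tgt(P_1'')=\src(Q_2)$, and $R\cong(P_1'\glue Q_1)\otimes(P_1''\glue Q_2)$ lands in the parallel union. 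You should make this use of interface consistency explicit; your ``permutation'' language suggests you may not have noticed that it is needed.

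A smaller imprecision: in the gluing subcase $P_2=Q_1\glue Q_2$, your ``strictly smaller subproblem'' is not automatic, since $Q_2$ could itself be a terminator, forcing $|Q_1|=n$. The clean fix is to apply the inductive hypothesis to $P_2$ directly---its subtree is strictly smaller than $T$---to first obtain a \emph{nontrivial} decomposition of $P_2$, and only then push $P_1$ across by associativity (gluing case) or by the interface-consistency split above (parallel case).
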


\begin{proof}
  By definition, $R\in G_n(k, \ell)$ if and only if $R$ is a gluing or
  parallel composition of smaller gp-iposets.  If $R=P\glue Q$, then
  $P\in G_p(k, m)$ and $Q\in G_q(m, \ell)$ for some $p, q<n$, and we
  can assume $p, q\ge 1$ because otherwise the composition would be
  trivial.  The number of points of $P\glue Q$ is $p+q-m$, hence
  $m=p+q-n$, and $m<p, q$ because we can assume that both $P$ and $Q$
  have at least one non-interface point; otherwise composition would
  again be trivial.

  If $R=P\otimes Q$, then $P\in G_p(k_1, \ell_1)$ and
  $Q\in G_q(k_2, \ell_2)$ for some $p+q=n$, and we can again assume
  $p, q\ge 1$, and $k_1+k_2=k$ and $\ell_1+\ell_2=\ell$ by definition
  of $\otimes$.  We have shown that $G_n(k, \ell)$ is included in the
  expression on the right-hand side; the reverse inclusion is
  trivial.
\end{proof}

\begin{lstfloat}[tb]
  \caption{Julia function (parts) to compute $G_n(k, \ell)$.}
  \label{ls:gpclosure}
  \vspace*{-3ex}
  \begin{jllisting}[numbers=left]
function gpclosure(n, k, l, iposets, filled, locks)
    #Return memoized if exists
    if filled[k, l, n]
        return (x[1] for x in vcat(iposets[:, k, l, n]...))
    end
    #If opposites exist, return opposites
    if filled[l, k, n]
        return (reverse(x[1]) for x in vcat(iposets[:, l, k, n]...))
    end
    #Otherwise, generate recursively
    lock(locks[end, k, l, n])
    #First, the gluings
    Threads.@threads for p in 1:(n-1)
        Threads.@threads for q in 1:(n-1)
            m = p + q - n
            if m < 0 || m ≥ p || m ≥ q || k > p || l > q
                continue
            end
            for P in gpclosure(p, k, m, iposets, filled, locks)
                for Q in gpclosure(q, m, l, iposets, filled, locks)
                    R = glue(P, Q)
                    neR = ne(R) #number of edges
                    pushuptoiso!(iposets[neR, k, l, n], ip, locks[neR, k, l, n])
                end
            end
        end
    end
    #Now, the parallel compositions
    ...
    filled[k, l, n] = true
    unlock(locks[end, k, l, n])
    return (x[1] for x in vcat(iposets[:, k, l, n]...))
end
  \end{jllisting}
  \vspace*{-3ex}
\end{lstfloat}

Listing \ref{ls:gpclosure} shows part of the recursive Julia function
which implements the above algorithm.  The multi-dimensional array
\jlinl{iposets} is used for memoization and initiated with the four
singletons in $\single4$, \jlinl{filled} is used to denote which parts
of \jlinl{iposets} have been computed, and \jlinl{locks} is used for
locking.  The heart of the procedure is the call to
\jlinl{pushuptoiso!} in line 23 which checks whether \jlinl{iposets}
already contains an element isomorphic to \jlinl{ip} and, if not,
pushes it into the array.

Due to the multi-threaded implementation and tight locking, we were
able to generate $G_7$ in about 4 minutes on a standard laptop.
Generating $G_8$ took altogether 300 hours in a distributed
computation to generate each $G_8(k, \ell)$ separately on four
different computers: two standard laptops and two Norwegian
supercomputers.  For generating iposets and analyzing forbidden
substructures we have benefited greatly from Brendan McKay's poset
collections.\footnote{See
  \url{http://users.cecs.anu.edu.au/~bdm/data/digraphs.html}.}

Deciding whether iposets are isomorphic is just as difficult as for
posets.  Brinkmann and McKay \cite{DBLP:journals/order/BrinkmannM02}
develop an algorithm to compute canonical representations: mappings
$f$ from posets to labeled posets so that $f(P)=f(P')$ precisely if
$P$ and $P'$ are isomorphic.  It is clear that if any such algorithm
were to run in polynomial time, then poset isomorphism, and thus also
graph isomorphism, would be in \textsf{P}.

Canonical representations are complete isomorphism invariants.  In our
software we are instead using an \emph{incomplete} isomorphism
invariant inspired by bisimulation \cite{book/Milner89} which can be
computed in polynomial time.  For digraph $G$ and a point $x$ of $G$,
denote by $\indeg(x)$ and $\outdeg(x)$ the in- and out-degrees of $x$
in $G$.

\begin{definition}
  An \emph{in-out bisimulation} between digraphs $G_1=(V_1, E_1)$ and
  $G_2=(V_2, E_2)$ is a relation $R\subseteq V_1\times V_2$ such that
  \begin{itemize}
  \item for all $(x_1, x_2)\in R$, $\indeg(x_1)=\indeg(x_2)$ and
    $\outdeg(x_1)=\outdeg(x_2)$;
  \item for all $(x_1, x_2)\in R$ and $(x_1, y_1)\in E_1$, there
    exists $(x_2, y_2)\in E_2$ with $(y_1, y_2)\in R$;
  \item for all $(x_1, x_2)\in R$ and $(x_2, y_2)\in E_2$, there
    exists $(x_1, y_1)\in E_1$ with $(y_1, y_2)\in R$.
  \end{itemize}
\end{definition}

Note that this is the same as a standard bisimulation
\cite{book/Milner89} between the transition systems (without initial
state) given by enriching digraphs with propositions stating each
vertex's in- and out-degree.  Digraphs are said to be in-out-bisimilar
if there exists an in-out-bisimulation joining them.

\begin{definition}
  Let $P$ be a poset.  The functions $\inhash, \outhash: P\to \Nat$
  are the least fixed points to the equations
  \begin{equation*}
    \inhash(x) = \indeg(x) + |P| \sum_{y<x} \inhash(y), \qquad
    \outhash(x) = \outdeg(x) + |P| \sum_{x<y} \outhash(y).
  \end{equation*}
\end{definition}

By acyclicity these hashes are well defined, and they may be computed
in linear time.  A \emph{hash isomorphism} between posets $P$ and $Q$
is a bijection $f:P\to Q$ such that
\begin{equation*}
  (\inhash(f(x)), \outhash(f(x)))=(\inhash(x), \outhash(x))
\end{equation*}
for all $x\in P$.

\begin{samepage}
\begin{lemma}
  Let $P$ and $Q$ be posets.
  \begin{enumerate}
  \item If $P$ and $Q$ are isomorphic, then they are hash isomorphic.
  \item If $P$ and $Q$ are hash isomorphic, then they are
    in-out-bisimilar.
  \end{enumerate}
\end{lemma}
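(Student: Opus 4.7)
The plan is to prove the two parts separately.

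For (1), let $f:P\to Q$ be a poset isomorphism. I would show $\inhash(f(x))=\inhash(x)$ by induction on a linear extension of $(P,<)$, processing elements bottom-up. The base case for minimal $x$ reduces to $\indeg(f(x))=\indeg(x)$, which holds since $f$ preserves order; the step uses $|P|=|Q|$, the bijection $\{y\in P : y<x\}\to\{z\in Q : z<f(x)\}$ supplied by $f$, and the inductive hypothesis applied to the elements below $x$. A dual top-down induction handles $\outhash$. Hence $f$ itself is a hash isomorphism.

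For (2), I propose the relation
\[
  R = \{(x,y)\in P\times Q : \inhash(x)=\inhash(y) \text{ and } \outhash(x)=\outhash(y)\}.
\]
Any hash isomorphism $g:P\to Q$ places $(x,g(x))\in R$ for every $x\in P$ and, by surjectivity of $g$, covers every $y\in Q$, so $R$ is total on both sides. The degree condition of in-out bisimulation follows from a modular trick: the defining equation $\inhash(x)=\indeg(x)+|P|\sum_{y<x}\inhash(y)$ together with $\indeg(x)\le|P|-1<|P|$ gives $\indeg(x)=\inhash(x) \bmod |P|$; hence $\indeg$, and dually $\outdeg$, is a function of $(\inhash(x),\outhash(x))$ and is preserved by $R$.

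The main obstacle is verifying the zig/zag conditions. Both reduce to the same stronger claim: for every $(x,y)\in R$, the multiset of $(\inhash,\outhash)$-pairs realised strictly above $x$ in $P$ equals the analogous multiset strictly above $y$ in $Q$. Once this is in hand, zig picks any required matching successor $y'>y$ for a given $x'>x$, and zag is symmetric. The key structural fact enabling an inductive proof is that $\outhash$ is strictly decreasing along $<$: if $x<z$ then $\outhash(x)\ge\outdeg(x)+|P|\outhash(z)>\outhash(z)$ whenever $|P|\ge 2$. I would therefore argue the multiset claim by well-founded induction on $\outhash(x)$: the base $\outhash(x)=0$ forces $x$ to be maximal, so both upsets are empty; in the step, peel off successor outhash-levels from top to bottom, using the induction hypothesis on the deeper layers together with the defining equation for $\outhash(x)$ and the global hash-isomorphism assumption to pin down the counts of successors at the current level. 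The delicate point, and the one most likely to require genuine work beyond this sketch, is converting the single equality on sums provided by $\outhash(x)$ into a multiset equality; the $\inhash$ component of the hash, which also increases strictly along $<$, is what I expect to compensate by constraining the possibilities layer by layer.
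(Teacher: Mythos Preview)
Your treatment of part~(1) is correct and simply unfolds what the paper states in one sentence: an isomorphism preserves the recursively defined hashes, hence is itself a hash isomorphism.

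For part~(2) your route diverges from the paper's. The paper does not build the maximal hash-equality relation or argue about multisets of successor hashes; it simply asserts that the graph $R_f=\{(x,f(x)):x\in P\}$ of a given hash isomorphism $f$ is already an in-out bisimulation. Your reduction to the ``stronger claim''---that for every $(x,y)\in R$ the multisets of $(\inhash,\outhash)$-pairs of strict successors of $x$ in $P$ and of $y$ in $Q$ coincide---cannot be completed, because that claim is \emph{false}. The first pair in Figure~\ref{fi:hashnoiso6} is a counterexample: both vertices labelled $0$ have $(\inhash,\outhash)=(0,2)$, so $(0,0)\in R$; but the successors of $0$ on the left carry hash-pairs $\{(2,0),(1,0)\}$, while on the right they carry $\{(1,0),(1,0)\}$. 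The single equality $\outhash(0)=2$ on both sides is exactly the ``sum'' information you possess, and the $\inhash$ component you hoped would break the tie does not, since both right-hand successors share $\inhash=1$. So the delicate point you flagged is genuinely fatal to this plan. (For what it is worth, the same pair also defeats the paper's one-liner: any hash isomorphism must send left~$0$ to right~$0$ and left~$3$ to right~$5$, yet $0<3$ on the left while $0\not<5$ on the right, so $R_f$ violates the zig condition as well; part~(2) as stated does not seem to hold under the natural total reading of ``in-out-bisimilar''.)
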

\end{samepage}

\begin{proof}
  If $f:P\to Q$ is an isomorphism, then it is also a hash isomorphism.
  If $f$ is a hash isomorphism, then the relation defined by $f$ is an
  in-out-bisimulation.
\end{proof}

Checking for existence of a hash isomorphism can be done in polynomial
time, for example by sorting the hashes.

\begin{figure}[tbp]
  \centering
  \begin{tikzpicture}[y=.7cm]
    \begin{scope}
      \node (1) at (0,0) {\intpt};
      \node (2) at (0,1) {\intpt};
      \node (0) at (0,2) {\intpt};
      \node (5) at (1,0) {\intpt};
      \node (3) at (1,1) {\intpt};
      \node (4) at (1,2) {\intpt};
      \path (0) edge (4) (0) edge (3) (2) edge (3) (1) edge (5);
    \end{scope}
    \begin{scope}[shift={(2,0)}]
      \node (1) at (0,0) {\intpt};
      \node (2) at (0,1) {\intpt};
      \node (0) at (0,2) {\intpt};
      \node (5) at (1,0) {\intpt};
      \node (3) at (1,1) {\intpt};
      \node (4) at (1,2) {\intpt};
      \path (0) edge (4) (0) edge (3) (2) edge (5) (1) edge (5);
    \end{scope}
    \begin{scope}[shift={(6,0)}]
      \node (2) at (0,0) {\intpt};
      \node (1) at (0,1) {\intpt};
      \node (0) at (0,2) {\intpt};
      \node (5) at (1,0) {\intpt};
      \node (4) at (1,1) {\intpt};
      \node (3) at (1,2) {\intpt};
      \path (0) edge (3) (0) edge (4) (1) edge (3) (1) edge (4) (2)
      edge (5);
    \end{scope}
    \begin{scope}[shift={(8,0)}]
      \node (2) at (0,0) {\intpt};
      \node (1) at (0,1) {\intpt};
      \node (0) at (0,2) {\intpt};
      \node (5) at (1,0) {\intpt};
      \node (4) at (1,1) {\intpt};
      \node (3) at (1,2) {\intpt};
      \path (0) edge (3) (0) edge (4) (1) edge (3) (1) edge (5) (2)
      edge (4);
    \end{scope}
  \end{tikzpicture}
  \caption{The two pairs of non-isomorphic posets on 6 points which
    are hash isomorphic.}
  \label{fi:hashnoiso6}
\end{figure}
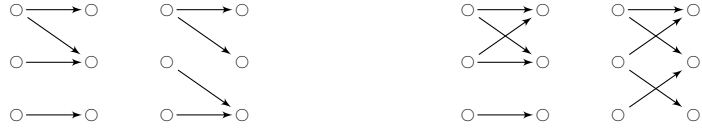

\begin{table}[bp]
  \centering
  \caption{Numbers of pairs of non-isomorphic but hash isomorphic
    posets on $n$ points; their proportion as part of all pairs of
    non-isomorphic posets; average numbers of bijections checked for
    isomorphism.}
  \label{ta:hashnoiso7+}
  \smallskip
  \begin{tabular}{r|rrr}
    $n$ & $\textsf{NIHI}(n)$ & $\textsf{NIHI}(n) / \PP(n)^2$ &
    $\textsf{nperm}(n)$ \\\hline
    5 & 0 & $0\phantom{{}\cdot 10^{-6}}$ & $0\phantom{{}\cdot
      10^{-6}}$ \\
    6 & 2 & $2\cdot 10^{-6}$ & $8\cdot 10^{-6}$ \\
    7 & 45 & $1\cdot 10^{-6}$ & $6\cdot 10^{-6}$ \\
    8 & 928 & $3\cdot 10^{-7}$ & $2\cdot 10^{-6}$ \\
    9 & 20443 & $6\cdot 10^{-8}$ & $4\cdot 10^{-7}$
  \end{tabular}
\end{table}

\begin{example}
  Hash isomorphisms are complete for posets on up to 5 points: if
  $|P|, |Q|\le 5$, then $P$ and $Q$ are isomorphic if and only if they
  are hash isomorphic.  On 6 points, there are two pairs of
  non-isomorphic posets which are hash isomorphic, depicted in Figure
  \ref{fi:hashnoiso6}.  Proportionally to the number of \emph{all}
  pairs of non-isomorphic posets, the number of ``false positives''
  grows rather slowly, see Table \ref{ta:hashnoiso7+}.  Hence it
  appears that hash isomorphism is a rather tight invariant which
  allows one to avoid most of the costly isomorphism checks.
\end{example}

\begin{lstfloat}[tb]
  \caption{Julia function to check poset isomorphism.}
  \label{ls:posetiso}
  \vspace*{-3ex}
  \begin{jllisting}[numbers=left]
function isomorphic(P::Poset, pv::Vprof, Q::Poset, qv::Vprof)
    #Start with the easy stuff
    P == Q && return true
    n = nv(P) #number of points
    (n != nv(Q) || ne(P) != ne(Q)) && return false
    #Check for hash isomorphism
    used = zeros(Bool, n)
    @inbounds for i in 1:n
        found = false
        for j in 1:n
            if pv[i] == qv[j] && !used[j]
                used[j] = true; found = true
                break
            end
        end
        !found && return false
    end
    #Collect all hash isomorphisms, mapping points to their possible images
    targets = Array{Array{Int}}(undef, n)
    targets .= [[]]
    @inbounds for v in 1:n
        for u in 1:n
            pv[v] == qv[u] && push!(targets[v], u)
        end
    end
    #Check all target permutations if they are isos
    for pos_isom in Iterators.product(targets...)
        if bijective(pos_isom) && isomorphic(P, Q, pos_isom)
            return true
        end
    end
    return false
end
  \end{jllisting}
  \vspace*{-3ex}
\end{lstfloat}

Listing \ref{ls:posetiso} shows our Julia code for checking whether
two posets are isomorphic.  The hashes are precomputed, so the function
\jlinl{isomorphic} takes two posets \jlinl{P} and \jlinl{Q} as
arguments as well as their hashes, \jlinl{pv} and \jlinl{qv} of type
\jlinl{Vprof} for ``vertex profile''.  After checking whether there is
a hash isomorphism, the hashes are used to constrain possible
isomorphisms: only bijections \jlinl{pos_isom} which are hash
isomorphisms are given to the isomorphism checker \jlinl{isomorphic(P,
  Q, pos_isom)}.  Table \ref{ta:hashnoiso7+} also shows the averages
for how many bijections are checked whether they are isomorphisms.

\section{Forbidden Substructures}
\label{se:forbidden}

Recall that an induced subposet $(Q, \mathord<_Q)$ of a poset
$(P, \mathord<_P)$ is a subset $Q\subseteq P$ with the order $<_Q$ the
restriction of $<_P$ to $Q$.  We have shown in \cite{BeyondN2} that
gluing-parallel posets are closed under induced subposets: if $Q$ is
an induced subposet of a gp-poset $P$, then also $Q$ is
gluing-parallel.  This begs the question whether gp-posets admit a
finite set of forbidden substructures: a set $\mcal F$ of posets which
are incomparable under the induced-subposet relation and such that any
poset is gluing-parallel if and only if it contains none of the
structures in $\mcal F$ as induced subposets.

\begin{proposition}[\cite{DBLP:conf/RelMiCS/FahrenbergJST20}]
  \label{pr:forbidden1}
  The following posets are contained in~$\mcal F$:
  \begin{gather*}
    \NN =\! \vcenter{\hbox{%
        \begin{tikzpicture}[y=.5cm]
          \node (0) at (0,0) {\intpt};
          \node (1) at (0,-1) {\intpt};
          \node (2) at (0,-2) {\intpt};
          \node (3) at (1,0) {\intpt};
          \node (4) at (1,-1) {\intpt};
          \node (5) at (1,-2) {\intpt};
          \path (0) edge (3) (1) edge (3) (1) edge (4) (2) edge (4)
          (2) edge (5);
        \end{tikzpicture}}}
    \qquad%
    \NPLUS =\! \vcenter{\hbox{%
        \begin{tikzpicture}[y=.5cm]
          \node (1) at (0,-1) {\intpt};
          \node (2) at (0,-2) {\intpt};
          \node (3) at (1,0) {\intpt};
          \node (4) at (1,-1) {\intpt};
          \node (5) at (1,-2) {\intpt};
          \node (6) at (2,0) {\intpt};
          \path (3) edge (6) (1) edge (3) (1) edge (4) (2) edge (4)
          (2) edge (5);
        \end{tikzpicture}}}
    \qquad%
    \NMINUS =\! \vcenter{\hbox{%
        \begin{tikzpicture}[y=.5cm]
          \node (0) at (0,0) {\intpt};
          \node (1) at (0,-1) {\intpt};
          \node (2) at (0,-2) {\intpt};
          \node (3) at (1,0) {\intpt};
          \node (4) at (1,-1) {\intpt};
          \node (-1) at (-1,-2) {\intpt};
          \path (0) edge (3) (1) edge (3) (1) edge (4) (2) edge (4)
          (-1) edge (2);
        \end{tikzpicture}}}
    \\
    \TC =\! \vcenter{\hbox{%
        \begin{tikzpicture}[y=.5cm]
          \node (0) at (0,0) {\intpt};
          \node (1) at (0,-1) {\intpt};
          \node (2) at (0,-2) {\intpt};
          \node (3) at (1,0) {\intpt};
          \node (4) at (1,-1) {\intpt};
          \node (5) at (1,-2) {\intpt};
          \path (0) edge (3) (1) edge (3) (1) edge (5) (2) edge (4)
          (2) edge (5) (0) edge (4);
        \end{tikzpicture}}}
    \qquad%
    \LN =\! \vcenter{\hbox{%
        \begin{tikzpicture}[y=.7cm]
          \node (0) at (0,0) {\intpt};
          \node (1) at (0,-1) {\intpt};
          \node (2) at (1,0) {\intpt};
          \node (3) at (1,-1) {\intpt};
          \node (4) at (2,0) {\intpt};
          \node (5) at (2,-1) {\intpt};
          \path (0) edge (2) (2) edge (4) (1) edge (4) (1) edge (3)
          (3) edge (5);
        \end{tikzpicture}}}
  \end{gather*}
\end{proposition}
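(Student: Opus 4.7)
My plan is to verify, for each of $\NN$, $\NPLUS$, $\NMINUS$, $\TC$, and $\LN$, two properties: (i) the poset itself is not gluing-parallel, and (ii) every proper induced subposet is gluing-parallel. Together with the observation that the five posets pairwise do not contain one another as induced subposets---immediate since they all have six points and are pairwise non-isomorphic---this places all five in~$\mcal F$.

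Property~(ii) is essentially free from the counts recorded in Table~\ref{ta:gpi}: each of the five diagrams has exactly six points, so every proper induced subposet has at most five points, and $\GP(5)=\PP(5)=63$ says every poset on at most five points is gluing-parallel.

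For property~(i) I argue by contradiction, analyzing the two possible shapes of a top-level decomposition. Let $P$ be one of the five posets; since $\src(P)=\tgt(P)=0$, if $P=P_1\otimes P_2$ with nonempty gp-iposets $P_1,P_2$ then $\src(P_i)=\tgt(P_i)=0$ and $P$ splits as a disjoint union of two nonempty posets with no cross relations, contradicting the fact that the comparability graph of each of the five diagrams is (easily checked to be) connected. If instead $P=P_1\glue P_2$, set $I=t_1[m_1]=s_2[n_2]$, $A=P_1\setminus I$, $B=P_2\setminus I$; nontriviality gives $A, B\neq\emptyset$, and the gluing formula forces $a<b$ for every $a\in A$ and $b\in B$, every $x\in I$ to be maximal in $A\cup I$ and minimal in $I\cup B$, and $I$ to be an antichain in $P$.

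The hard part is then to show that no partition $(A,I,B)$ of the six-point carrier meeting these structural constraints exists. The interface $I$ may well be nonempty, so one cannot simply look for a ``horizontal cut'' in the Hasse diagram, and one must enumerate cases. The obstruction is always that each of the five posets contains an induced copy of $\N$ (and $\TC$ also a copy of $\twotwo$) whose four vertices resist consistent distribution among $A$, $I$, $B$: the two minima of the $\N$ must sit in $A\cup I$, its two maxima in $I\cup B$, but the incomparable diagonal of the $\N$ then forces either a pair $(a,b)\in A\times B$ with $a\not<b$ or two comparable elements into $I$, violating the antichain property. Running this analysis on the explicit diagrams yields the desired contradiction for each of the five posets; $\NN$, $\NPLUS$, $\NMINUS$ follow by essentially the same computation, while $\TC$ and $\LN$ require small variations adapted to their specific edge patterns.
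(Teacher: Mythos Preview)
The paper does not actually prove Proposition~\ref{pr:forbidden1}: it is imported from~\cite{DBLP:conf/RelMiCS/FahrenbergJST20} and then independently reconfirmed by the exhaustive search of Listing~\ref{ls:forbiddensubs}. So there is no in-text argument to compare against; your proposal supplies one where the paper simply cites and computes.

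Your strategy is sound. Using $\GP(5)=\PP(5)$ from Table~\ref{ta:gpi} to dispatch minimality (your property~(ii)) is efficient, though note that it leans on the very computation the paper is reporting, so it is not independent of the machine check. The structural analysis of a nontrivial gluing into a triple $(A,I,B)$ with $A$ downward closed, $B$ upward closed, $I$ an antichain, and every pair in $A\times B$ ordered is exactly the right reformulation, and ruling out $\otimes$ by connectedness of the comparability graph is fine.

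The soft spot is the ``$\N$-obstruction'' heuristic. As stated it cannot be the whole story, since $\N$ itself \emph{is} gluing-parallel (indeed $\GP(4)=\PP(4)$); the contradiction has to come from how the $\N$-copy interacts with the remaining two points, and you leave that interaction entirely implicit. For the four bipartite cases $\NN$, $\NPLUS$, $\NMINUS$, $\TC$ one first gets $A\subseteq P^{\min}$ and $B\subseteq P^{\max}$, and then a short case split on which minima lie in $A$ versus $I$ closes out; for $\LN$ the middle layer $\{2,3\}$ requires its own treatment. None of this is difficult, but a complete proof should carry at least one case through explicitly rather than asserting that ``running this analysis \dots\ yields the desired contradiction.''
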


\begin{lstfloat}[tb]
  \caption{Julia code to find forbidden substructures.}
  \label{ls:forbiddensubs}
  \vspace*{-3ex}
  \begin{jllisting}[numbers=left]
function findforbiddensubs()
    n = 5
    fsubs = Array{Poset}(undef, 0)
    while true
        n += 1
        pngs = posetsnotgp(n)
        newfsubs = nosubs(pngs, fsubs)
        if !isempty(newfsubs)
            println("Found new forbidden substructure(s) on $n points:")
            for s in newfsubs
                println(string(s))
            end
            append!(fsubs, newfsubs)
        end
    end
    return fsubs
end
function posetsnotgp(n)
    ps = posets(n)
    gps = [ip.poset for ip in gpiposets(n, 0, 0)]
    return diffuptoiso(ps, gps)
end
function nosubs(posets, subs)
    res = Array{Poset}(undef, 0)
    for p in posets
        hasnosub = true
        for s in subs
            sg, _ = subgraph(p, s)
            if sg
                hasnosub = false
                break
            end
        end
        hasnosub && push!(res, p)
    end
    return res
end
  \end{jllisting}
  \vspace*{-3ex}
\end{lstfloat}

Listing \ref{ls:forbiddensubs} shows our implementation of the
semi-algorithm to find forbidden substructures.  These are collected
in the array \jlinl{fsubs} and printed out as they are found.  The
function \jlinl{posetsnotgp} returns the posets on $n$ points which are
not gluing-parallel, using the function \jlinl{diffuptoiso} (not
shown) which computes the difference between two (i)poset arrays up to
isomorphism.  The function \jlinl{nosubs} returns all elements of
\jlinl{posets} which have no induced subposet isomorphic to any
element of \jlinl{subs}; this latter check is carried out in
\jlinl{subgraph} which we also do not show here.  Using McKay's files
of posets and our own precomputed files of iposets,
\jlinl{findforbiddensubs} finds the forbidden substructures of
Proposition \ref{pr:forbidden1} almost immediately.  After a few
seconds it finds another forbidden substructure on 8 points (see
below), and after an hour it verifies that there are no new forbidden
substructures on 9 points.

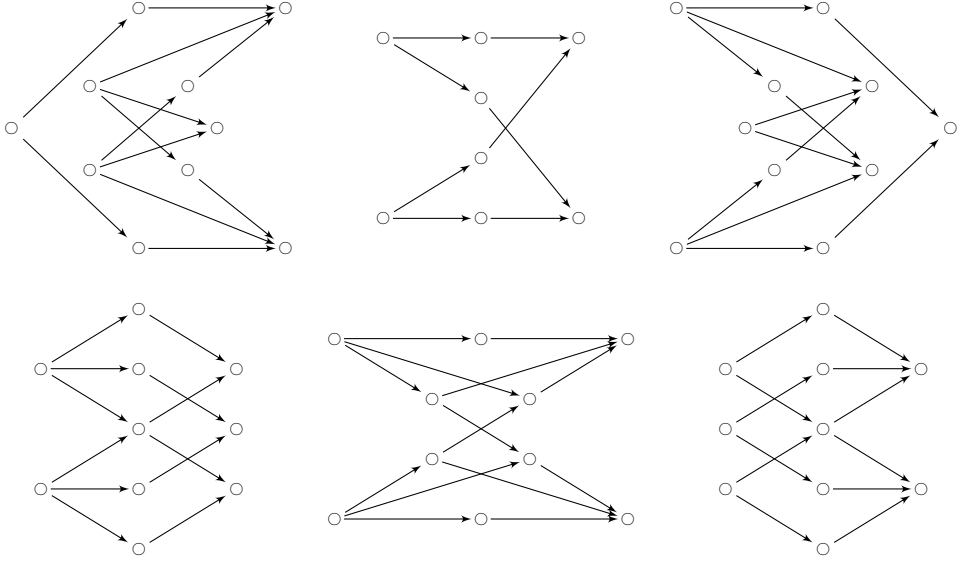
\begin{figure}[tbp]
  \centering
  \begin{tikzpicture}[x=1.3cm, y=.8cm]
    \begin{scope}[shift={(0,0)}]
      \node (2) at (0,-1.3) {\intpt};
      \node (0) at (-.8,-2) {\intpt};
      \node (1) at (0,-2.7) {\intpt};
      \node (4) at (.5,0)  {\intpt};
      \node (6) at (1,-1.3) {\intpt};
      \node (3) at (1.3,-2) {\intpt};
      \node (7) at (1,-2.7) {\intpt};
      \node (5) at (.5,-4) {\intpt};
      \node (8) at (2,0)  {\intpt};
      \node (9) at (2,-4) {\intpt};
      \path (0) edge (4) (0) edge (5) (1) edge (3) (1) edge (6) (1) edge
      (9) (2) edge (3) (2) edge (7) (2) edge (8) (4) edge (8) (5) edge
      (9) (6) edge (8) (7) edge (9);
    \end{scope}
    \begin{scope}[shift={(3,-.5)}]
      \node (0) at (0,0)  {\intpt};
      \node (1) at (0,-3) {\intpt};
      \node (2) at (1,0)  {\intpt};
      \node (3) at (1,-1) {\intpt};
      \node (4) at (1,-2) {\intpt};
      \node (5) at (1,-3) {\intpt};
      \node (6) at (2,0)  {\intpt};
      \node (7) at (2,-3) {\intpt};
      \path (0) edge (2) (0) edge (3) (1) edge (4) (1) edge (5) (2)
      edge (6) (3) edge (7) (4) edge (6) (5) edge (7);
    \end{scope}
    \begin{scope}[shift={(6,0)}]
      \node (2) at (0,0)  {\intpt};
      \node (1) at (0,-4) {\intpt};
      \node (0) at (.7,-2) {\intpt};
      \node (5) at (1,-1.3) {\intpt};
      \node (4) at (1,-2.7) {\intpt};
      \node (6) at (1.5,0)  {\intpt};
      \node (3) at (1.5,-4) {\intpt};
      \node (7) at (2,-1.3) {\intpt};
      \node (8) at (2,-2.7) {\intpt};
      \node (9) at (2.8,-2) {\intpt};
      \path (0) edge (7) (0) edge (8) (1) edge (3) (1) edge (4) (1) edge
      (8) (2) edge (5) (2) edge (6) (2) edge (7) (3) edge (9) (4) edge
      (7) (5) edge (8) (6) edge (9);
    \end{scope}
    \begin{scope}[shift={(-.5,-5)}]
      \node (0) at (0,-1) {\intpt};
      \node (1) at (0,-3) {\intpt};
      \node (2) at (1,-2) {\intpt};
      \node (3) at (1,-1) {\intpt};
      \node (4) at (1,0)  {\intpt};
      \node (5) at (1,-4) {\intpt};
      \node (6) at (1,-3) {\intpt};
      \node (7) at (2,-1) {\intpt};
      \node (8) at (2,-3) {\intpt};
      \node (9) at (2,-2) {\intpt};
      \path (0) edge (2) (0) edge (3) (0) edge (4) (1) edge (2) (1)
      edge (5) (1) edge (6) (2) edge (7) (2) edge (8) (3) edge (9) (4)
      edge (7) (5) edge (8) (6) edge (9);
    \end{scope}
    \begin{scope}[shift={(2.5,-4.5)}]
      \node (0) at (0,-1) {\intpt};
      \node (1) at (0,-4) {\intpt};
      \node (2) at (1.5,-4) {\intpt};
      \node (3) at (1.5,-1) {\intpt};
      \node (4) at (1,-2) {\intpt};
      \node (5) at (1,-3) {\intpt};
      \node (6) at (2,-3) {\intpt};
      \node (7) at (2,-2) {\intpt};
      \node (8) at (3,-4) {\intpt};
      \node (9) at (3,-1) {\intpt};
      \path (0) edge (3) (0) edge (4) (0) edge (7) (1) edge (2) (1) edge
      (5) (1) edge (6) (2) edge (8) (3) edge (9) (4) edge (6) (4) edge
      (9) (5) edge (7) (5) edge (8) (6) edge (8) (7) edge (9);
    \end{scope}
    \begin{scope}[shift={(6.5,-5)}]
      \node (0) at (0,-1) {\intpt};
      \node (1) at (0,-3) {\intpt};
      \node (2) at (0,-2) {\intpt};
      \node (3) at (1,-2) {\intpt};
      \node (4) at (1,0)  {\intpt};
      \node (5) at (1,-4) {\intpt};
      \node (6) at (1,-1) {\intpt};
      \node (7) at (1,-3) {\intpt};
      \node (8) at (2,-1) {\intpt};
      \node (9) at (2,-3) {\intpt};
      \path (0) edge (3) (0) edge (4) (1) edge (3) (1) edge (5) (2) edge
      (6) (2) edge (7) (3) edge (8) (3) edge (9) (4) edge (8) (5) edge
      (9) (6) edge (8) (7) edge (9);
    \end{scope}
  \end{tikzpicture}
  \caption{Additional forbidden substructures for gp-posets.}
  \label{fi:forbidden2}
\end{figure}

\begin{proposition}[\cite{BeyondN2}]
  \label{pr:forbidden2}
  When restricting to posets on at most 10 points, $\mcal F$ contains
  precisely the five posets of Proposition \ref{pr:forbidden1} and the
  six posets in Figure \ref{fi:forbidden2}.
\end{proposition}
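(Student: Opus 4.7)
The plan is to verify Proposition \ref{pr:forbidden2} computationally by running the semi-algorithm \jlinl{findforbiddensubs} of Listing \ref{ls:forbiddensubs} through $n=10$, seeded with an initially empty list of forbidden substructures. Its correctness rests on the closure of gp-posets under induced subposets (established in \cite{BeyondN2}): every non-gluing-parallel poset must contain at least one \emph{minimal} non-gp induced subposet, and processing point counts in increasing order guarantees that each such minimal forbidden substructure is detected exactly at the step where $n$ equals its number of points.

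Concretely, for each $n\in\{6, 7, 8, 9, 10\}$, I would first compute the set of non-gluing-parallel posets on $n$ points via \jlinl{posetsnotgp(n)}. The full poset enumeration is imported from McKay's catalog, and the full set $G_n(0,0)$ is produced by the recursive generator of Listing \ref{ls:gpclosure}; \jlinl{diffuptoiso} then performs the isomorphism-aware set difference. Each surviving poset is passed through \jlinl{nosubs} against the cumulative list of previously recorded forbidden substructures; whatever still survives is, by construction, both non-gluing-parallel and minimal with respect to the induced-subposet order, hence belongs to $\mcal F$, and is appended. Executing this loop, I expect to recover the five posets of Proposition \ref{pr:forbidden1} already at $n=6$, one further poset (the 8-point member of Figure \ref{fi:forbidden2}) at $n=8$, and the remaining five posets of Figure \ref{fi:forbidden2} at $n=10$, with $n=7$ and $n=9$ yielding no new elements. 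Since no candidates survive at $n\le 10$ outside these eleven, $\mcal F$ restricted to posets on at most 10 points consists precisely of them.

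The main obstacle is raw compute at $n=10$: roughly $2.6\cdot 10^{6}$ posets must be generated or loaded, the $1.2\cdot 10^{6}$ gp-posets must be produced by the distributed procedure described in Section \ref{se:software} (and Footnote \ref{fn:norway}), and each of the resulting $\approx 1.4\cdot 10^{6}$ non-gp posets must then be tested for induced containment against the growing list of forbidden substructures—a check that, while NP-hard in general, is tractable given the small 6-, 8-, and 10-point patterns involved and the linear-time hash invariant of Section \ref{se:software} used as a pre-filter. A secondary concern is the trustworthiness of the subposet matcher underpinning \jlinl{subgraph}: any mistake there could either fabricate spurious forbidden substructures or conceal a genuine one, so the output should be cross-validated against an independent implementation and, for each newly reported poset, by exhibiting explicitly that no nontrivial gluing or parallel decomposition exists, which can in turn be automated via the recursion of Lemma \ref{le:gp-rec} applied to the poset itself.
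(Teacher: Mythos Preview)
Your proposal is correct and, for $n\le 9$, coincides with what the paper actually does using \jlinl{findforbiddensubs}. At $n=10$ the paper departs from this by inverting the order of the two filters: rather than first forming \jlinl{posetsnotgp(10)} (an isomorphism-aware diff of $\approx 2.6\cdot 10^6$ posets against $\approx 1.2\cdot 10^6$ gp-posets) and then calling \jlinl{nosubs}, the paper first strips from McKay's \jlinl{posets10.txt} every poset containing one of the six already-known forbidden substructures on 6 or 8 points, and only afterwards filters the much smaller survivor set against the gp-posets, which are kept not as a single $G_{10}(0,0)$ but as 46 separate files (45 files of gluings $G_p(0,k)\glue G_q(k,0)$ plus one of parallel compositions). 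Both orderings produce the same eleven posets; the paper's variant merely replaces one enormous diff by a cheap substructure prefilter followed by many small diffs, which is what made the two-week computation feasible. Your remarks on cross-validating \jlinl{subgraph} and on certifying non-decomposability of each reported poset go beyond what the paper records.
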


In order to find the forbidden substructures on 10 points in Figure
\ref{fi:forbidden2}, we used another, distributed algorithm which took
about two weeks to run.  We generated 45 separate files containing the
gp-iposets on 10 points obtained from gluing elements of $G_n(0, k)$
and $G_m(k,0)$ for $n\in\{1, \dotsc, 9\}$ and
$m\in\{10-n, \dotsc, 9\}$ (thus $k=10-n-m$), each reduced up to
isomorphism, and one file containing all gp-iposets on 10 points
obtained as parallel compositions of smaller gp-iposets.  Then we took
\jlinl{posets10.txt}, removed posets containing one of our forbidden
substructures on 6 or 8 points, and then successively filtered it
through these 46 files, using \jlinl{diffuptoiso}.

Whether there are further forbidden substructures (on 11 points or
more), and whether $\mcal F$ is a finite set, remains open.

\section{Discrete Iposets}
\label{se:discrete}

\begin{table}[tbp]
  \centering
  \caption{Numbers of discrete iposets, gp-discrete iposets, starters,
    and gp-starters on $n$ points.  (Numbers of terminators and
    gp-terminators are the same as in the two last columns.)}
  \label{ta:discrete}
  \smallskip
  \begin{tabular}{r|rrrr}
    $n$ & $\textsf{D}(n)$ & $\textsf{GPD}(n)$ & $\textsf{S}(n)$ &
    $\textsf{GPS}(n)$ \\\hline
    0 & 1 & 1 & 1 & 1 \\
    1 & 4 & 4 & 2 & 2 \\
    2 & 13 & 12 & 5 & 4\\
    3 & 45 & 33 & 16 & 8 \\
    4 & 184 & 88 & 65 & 16 \\
    5 & 913 & 232 & 326 & 32 \\
    6 & 5428 & 609 & 1957 & 64 \\
    7 & 37.764 & 1596 & 13.700 & 128 \\
    8 & 300.969 & 4180 & 109.601 & 256 \\
    9 & 2.702.152 & 10.945 & 986.410 & 512 \\
    10 & 26.977.189 & 28.656 & 9.864.101 & 1024 \\[1ex]
    EIS & & 27941 & 522 & 79
  \end{tabular}
\end{table}

This section explores the ``fine structure'' of iposets.  An iposet
$(s, P, t):n\to m$ is \emph{discrete} if $P$ is, it is a
\emph{starter} if, additionally, $t:[m]\to P$ is bijective, and a
\emph{terminator} if $s:[n]\to P$ is bijective.  Any discrete iposet
is the gluing of a starter with a terminator and is gluing-parallel if
and only if it is interface consistent.  The following is clear.

\begin{lemma}
  \label{le:trivialglue}
  A gluing $P=P_1\glue P_2$ is trivial if and only if $P_1$ is a
  starter or $P_2$ is a terminator. \qed
\end{lemma}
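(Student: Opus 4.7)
The plan is to prove both directions by a careful cardinality-and-order analysis of the quotient $(P_1 \sqcup P_2)_{/t_1 \equiv s_2}$ defined in the gluing.

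For the ``if'' direction, suppose first that $P_2$ is a terminator, so $P_2$ is discrete and $s_2\colon[n_2]\to P_2$ is a bijection. Then every element of $P_2$ lies in $s_2([n_2])$, so the quotient identifies every element of $P_2$ with an element of $P_1$; counting gives $|P_1 \glue P_2| = |P_1| + |P_2| - m = |P_1|$. Next I would inspect the order: within $P_1$ nothing changes, within $P_2$ there are no relations because $P_2$ is discrete, and the cross relations $(p,1)<(q,2)$ require $q \notin s_2[n_2]$, which is vacuous. Hence $P_1\glue P_2 = P_1$ as posets. The case where $P_1$ is a starter is symmetric, using that $P_1$ is discrete, every element of $P_1$ lies in $t_1([m_1])$, and the cross-relation condition $p \notin t_1[m_1]$ is vacuous; this yields $P_1 \glue P_2 = P_2$.

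For the ``only if'' direction, suppose $P := P_1 \glue P_2$ is trivial. Consider the case $P = P_1$ as posets, so $|P_1| = |P_1| + |P_2| - m$, giving $|P_2| = m = n_2$. Since $s_2\colon[n_2] \to P_2$ is injective and has equal-size codomain, it is a bijection; in particular every element of $P_2$ is minimal, so the order of $P_2$ is empty and $P_2$ is discrete. Thus $P_2$ is a terminator. The case $P = P_2$ is handled symmetrically via $t_1$, yielding that $P_1$ is a starter.

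I don't expect any serious obstacle: the whole argument reduces to (i) a size-counting step using that a gluing along $m$ interface points identifies $m$ elements, and (ii) the observation that if an injection $[k]\to Q$ into a set of minimal (resp.\ maximal) elements of a poset is surjective then the poset is forced to be discrete. The only point requiring a little care is confirming the cross-relation clause in the definition of $\glue$ is vacuous on the side being ``consumed,'' which follows immediately from the surjectivity of $t_1$ or $s_2$.
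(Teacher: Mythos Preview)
Your proof is correct. The paper does not actually give a proof of this lemma: it is introduced by ``The following is clear'' and carries a trailing \qed with no argument. Your cardinality count on the quotient together with the observation that a bijective interface map into $P^{\min}$ (resp.\ $P^{\max}$) forces discreteness is exactly the routine verification the paper is leaving to the reader, so there is nothing substantive to compare.
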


The next proposition shows numbers of some classes of discrete
iposets, see also Table~\ref{ta:discrete}.

\begin{proposition}
  Let $n\ge 0$.  Up to isomorphism,
  \begin{enumerate}
  \item there are $2^n$ gp-starters and $2^n$ gp-terminators on $n$
    points;
  \item there are $\sum_{k=0}^n \frac{n!}{k!}$ starters and
    $\sum_{k=0}^n \frac{n!}{k!}$ terminators on $n$ points;
  \item there are $\sum_{s,t=0}^n \sum_{u=\max(0,s+t-n)}^{\min(s,t)}
    \binom{s}{u} \binom{t}{u}$ gp-discrete iposets on $n$ points;
  \item there are
    $\sum_{s,t=0}^n \sum_{u=\max(0,s+t-n)}^{\min(s,t)} \binom{s}{u}
    \binom{t}{u} u!$ discrete iposets on $n$ points.
  \end{enumerate}
\end{proposition}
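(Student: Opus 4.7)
The plan is to prove each of the four counts by a direct combinatorial argument, relying on the paper's observation just before the proposition that a discrete iposet is gluing-parallel if and only if it is interface consistent.

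For (1), a gp-starter on $n$ points is a starter that is interface consistent. I would identify $P$ with $[n]$ via the bijective target $t$; then interface consistency forces the source injection $s : [k] \to [n]$ to be order-preserving, since every element of $s([k])$ lies in $t([n]) = P$ and the orderings induced by $s^{-1}$ and by $t^{-1} = \id$ must agree. Order-preserving injections $[k] \to [n]$ are in bijection with $k$-subsets of $[n]$, so the total is $\sum_{k=0}^n \binom{n}{k} = 2^n$; gp-terminators are counted dually by swapping the roles of $s$ and $t$.

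For (2), a starter on $n$ points has $t$ bijective, so up to isomorphism I may fix $P = [n]$ and $t = \id_{[n]}$. Any iposet isomorphism must then fix $t$ and hence be the identity, so isomorphism classes are in bijection with injections $s : [k] \to [n]$ for some $k \in \{0, \dots, n\}$. Summing the number of such injections over $k$ yields $\sum_{k=0}^n n!/(n-k)! = \sum_{k=0}^n n!/k!$ after reindexing; the terminator count follows by symmetry.

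For (3) and (4), I would count discrete iposets on $n$ points by the parameters $(a, b, u) = (\src, \tgt, |s([a]) \cap t([b])|)$, where feasibility forces $\max(0, a+b-n) \le u \le \min(a, b)$. On labeled configurations with $P = [n]$, the group $S_n$ acts by $\sigma \cdot (s, t) = (\sigma \circ s, \sigma \circ t)$ with orbits equal to isomorphism classes, and every stabilizer consists of permutations fixing $s([a]) \cup t([b])$ pointwise and hence has size $(n-a-b+u)!$. For given $(a, b, u)$ the labeled configurations are obtained by choosing a $4$-block partition of $[n]$ (multinomial $\binom{n}{u,\,a-u,\,b-u,\,n-a-b+u}$), a bijection $s : [a] \to s([a])$ ($a!$ choices), and a bijection $t : [b] \to t([b])$. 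The latter contributes $b!$ choices in (4) but only $b!/u!$ in (3), since interface consistency on the $u$ common points forces the bijection from $t^{-1}(s([a])\cap t([b]))$ to $s([a]) \cap t([b])$ to agree with the ordering induced by $s$. Dividing the labeled count by the orbit size $n!/(n-a-b+u)!$ telescopes the factorials to $\binom{a}{u}\binom{b}{u}u!$ in (4) and $\binom{a}{u}\binom{b}{u}$ in (3); summing over the admissible $(a, b, u)$ gives the stated formulas.

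The main obstacle is the orbit–stabilizer accounting in (3) and (4): the multinomial partition count, the two bijection counts, the $u!$ factor lost to interface consistency, and the stabilizer size all have to be tracked correctly for the many factorials to cancel down to the compact binomial expression claimed.
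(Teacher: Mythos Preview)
Your argument is correct. Parts (1) and (2) match the paper's reasoning essentially verbatim: identify $P$ with $[n]$ via the bijective interface, then count injections $[k]\to[n]$, either order-preserving (yielding $\binom{n}{k}$) or arbitrary (yielding $\binom{n}{k}k!$), and sum over $k$.

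For (3) and (4) your route genuinely differs from the paper's. The paper gives a direct normal-form argument: for fixed $(s,t,u)$ the points outside the overlap contribute a single isomorphism class, while the overlap is determined by the two subsets $s^{-1}(\text{overlap})\subseteq[s]$ and $t^{-1}(\text{overlap})\subseteq[t]$, giving $\binom{s}{u}\binom{t}{u}$ choices, with the matching between them forced by interface consistency (or free, adding a factor $u!$, without it). You instead count labeled configurations on $P=[n]$ and divide by the $S_n$-orbit size, using that the stabilizer of $(s,t)$ is $\mathrm{Sym}([n]\setminus(s([a])\cup t([b])))$ of order $(n-a-b+u)!$; the multinomial and bijection counts then cancel against $n!/(n-a-b+u)!$ to leave $\binom{a}{u}\binom{b}{u}u!$, respectively $\binom{a}{u}\binom{b}{u}$. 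Your orbit--stabilizer calculation is a bit heavier in bookkeeping but makes every cancellation explicit, whereas the paper's ``the points not in both interfaces only give rise to one isomorphism class'' is terser and relies on the reader seeing why the non-overlap data can always be normalized away. Both arguments are sound; yours would scale more mechanically to variants of the problem, the paper's is quicker once one believes the normal-form claim.
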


\begin{samepage}
The third term above can be simplified by
\begin{equation*}
  \sum_{s,t=0}^n\, \sum_{u=\max(0,s+t-n)}^{\min(s,t)} \binom{s}{u}
  \binom{t}{u} = \sum_{i=0}^n \binom{n+2+i}{n-i},
\end{equation*}
using a version of Vandermonde's identity; we are not aware of any
such simplification for the last term.
\end{samepage}

\begin{proof}
  \mbox{}
  \begin{enumerate}
  \item For any $k\in\{0,\dotsc,n\}$, there are $\binom{n}{k}$
    non-isomorphic interface consistent starters on $n$ points with
    $k$ of them in the starting interface.
  \item Similarly, there are $\binom{n}{k} k!$ non-isomorphic starters
    on $n$ points with $k$ points in the starting interface when not
    requiring interface consistency.
  \item Let $s,t,u\in\{0,\dotsc,n\}$ and consider the number of
    non-isomorphic interface consistent discrete iposets on $n$ points
    with $s$ points in the starting, $t$ points in the terminating,
    and $u$ points in both interfaces, then necessarily
    $s+t-n\le u\le \min(s,t)$.  The points not in both interfaces only
    give rise to one isomorphism class, the points in the overlap may
    be chosen in $\binom{s}{u} \binom{t}{u}$ non-isomorphic ways, and
    their order is unique by interface consistency.
  \item The argument is the same as above; but the missing interface
    consistency requirement adds a factor $u!$. \qedhere
  \end{enumerate}
\end{proof}

A discrete iposet $(s,P,t):n\to n$ is a \emph{symmetry} if it is both
a starter and a terminator, that is, $s$ and $t$ are both bijective.
All points of $P$ are in the starting and terminating interfaces, but
the permutation $t^{-1}\circ s:[n]\to [n]$ is not necessarily an
identity.  It is clear that there are precisely $n!$ non-isomorphic
symmetries on $n$ points, and that any discrete iposet $P$ may be
written $P\cong \sigma\glue Q\cong R\glue \tau$ for symmetries
$\sigma$, $\tau$ and $Q$ and $R$ interface consistent.

We finish this section by a special case of the interchange property
relating parallel and gluing compositions, \cf Remark
\ref{re:interchange}.

\begin{lemma}[\cite{BeyondN2}]
  \label{le:interchange}
  Let $P_1$, $P_2$, $Q_1$, $Q_2$ be iposets such that
  $\tgt(P_1)=\src(Q_1)$ and $\tgt(P_2)=\src(Q_2)$.  Then
  $(P_1\otimes P_2)\glue(Q_1\otimes Q_2)=(P_1\glue
  Q_1)\otimes(P_2\glue Q_2)$ if and only if $P_1\glue Q_1$ or
  $P_2\glue Q_2$ is discrete.
\end{lemma}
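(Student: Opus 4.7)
The carriers and interface maps of both iposets coincide: both are built on $P_1\sqcup P_2\sqcup Q_1\sqcup Q_2$ with $\tgt(P_i)$ identified with $\src(Q_i)$ for $i=1,2$, and both have source $\src(P_1)\sqcup\src(P_2)$ and target $\tgt(Q_1)\sqcup\tgt(Q_2)$. Hence the lemma reduces to comparing the two partial orders on this common carrier.

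I would expand both orders from the definitions. On the left $(P_1\otimes P_2)\glue(Q_1\otimes Q_2)$, one finds $p<q$ iff either $p,q$ both lie in one of the $P_i$ or in one of the $Q_i$ and are related there, or $p\in(P_1\cup P_2)\setminus(\tgt(P_1)\cup\tgt(P_2))$ and $q\in(Q_1\cup Q_2)\setminus(\src(Q_1)\cup\src(Q_2))$, using that $\tgt(P_1\otimes P_2)=\tgt(P_1)\sqcup\tgt(P_2)$ and analogously for $\src(Q_1\otimes Q_2)$. On the right $(P_1\glue Q_1)\otimes(P_2\glue Q_2)$, one has $p<q$ iff $p,q$ both sit in the same $P_i\glue Q_i$ and are related there, which unfolds to the internal $P_i$ or $Q_i$ order together with $p\in P_i\setminus\tgt(P_i)$ related to $q\in Q_i\setminus\src(Q_i)$.

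A direct comparison then shows that the two orders agree on pairs inside a single $(P_i,Q_i)$-block, and can differ only on ``cross-pair'' relations: $p\in P_i\setminus\tgt(P_i)$ paired with $q\in Q_j\setminus\src(Q_j)$ for $i\ne j$ are related on the left but never on the right. The equation thus holds if and only if no such cross-pair relations exist, i.e.\ at least one of $P_i\setminus\tgt(P_i)$ and $Q_j\setminus\src(Q_j)$ is empty for each pair $i\ne j$.

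The final step is to translate this combinatorial condition into the claimed discreteness condition. For the backward direction, if $P_1\glue Q_1$ is discrete then in particular $P_1$ and $Q_1$ are themselves antichains and, by Lemma \ref{le:trivialglue}, the gluing is trivial, forcing $P_1$ to be a starter or $Q_1$ to be a terminator; a short case analysis then shows the cross-pair sets vanish, and similarly for $P_2\glue Q_2$. The forward direction is the contrapositive: assuming both $P_1\glue Q_1$ and $P_2\glue Q_2$ are non-discrete, one pinpoints, in each block, an internal relation or a non-trivial cross-relation and uses it to exhibit a cross-pair relation on the left that is absent on the right. The main obstacle is keeping this case analysis tidy, since the discreteness hypothesis is local to each $(P_i,Q_i)$-pair while the cross-pair condition couples $P_i$ with $Q_j$ for $i\ne j$; the bookkeeping must be done carefully to match the two sides of the equivalence.
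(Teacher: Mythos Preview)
The paper does not prove this lemma; it is imported from \cite{BeyondN2} without argument, so there is nothing in the present paper to compare your proposal against. Evaluating the proposal on its own merits:

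Your reduction is correct up to and including the cross-pair characterisation. The two iposets share carrier and interfaces, and the orders differ exactly on pairs $p\in P_i\setminus t_{P_i}[\tgt(P_i)]$, $q\in Q_j\setminus s_{Q_j}[\src(Q_j)]$ with $i\ne j$. Writing $a,b$ for ``$P_1$, resp.\ $P_2$, is a starter'' and $c,d$ for ``$Q_1$, resp.\ $Q_2$, is a terminator'', the interchange holds precisely when $(a\lor d)\land(b\lor c)$.

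The gap is your final step, where you claim this condition is equivalent to ``$P_1\glue Q_1$ or $P_2\glue Q_2$ is discrete''. It is not. From $P_1\glue Q_1$ discrete you correctly extract $a\lor c$; but $a$ alone only kills the $(1,2)$ cross-pair and $c$ alone only the $(2,1)$ cross-pair, so the promised ``short case analysis'' cannot close. Concretely: let $P_1$ be a one-point starter, $Q_1$ two discrete points with only one in the source interface, $P_2$ a two-chain with target its maximum, and $Q_2$ a one-point terminator. Then $P_1\glue Q_1$ is discrete (two isolated points), yet $P_2\setminus t_{P_2}[1]$ and $Q_1\setminus s_{Q_1}[1]$ are both nonempty, and one checks directly that the left-hand side has a relation absent on the right. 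The forward direction fails dually: take $P_1,P_2$ both one-point starters and $Q_1,Q_2$ non-discrete; then $a\land b$ holds, so the interchange is valid (indeed $P_1\otimes P_2$ is a starter and the big gluing is trivial), yet each $P_i\glue Q_i\cong Q_i$ is non-discrete.

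Your cross-pair condition $(a\lor d)\land(b\lor c)$ is the genuine characterisation; the discrepancy lies with the discreteness formulation as stated here, not with your analysis. Either the statement has been transcribed imprecisely from \cite{BeyondN2} or an ambient hypothesis has been dropped; in any case the ``short case analysis'' you defer cannot be completed for the statement as written.
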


\section{Iposets with Full Interfaces}
\label{se:wink}

We now introduce a class of iposets where \emph{all} minimal and/or
maximal points are in the interfaces; we name these after Winkowski
\cite{DBLP:journals/ipl/Winkowski77} who, to the best of our
knowledge, was the first to consider posets with interfaces, and who
only considered such full-interface iposets.

\begin{definition}
  An iposet $(s,P,t):n\to m$ is \emph{left Winkowski} if
  $s([n])= P^{\min}$, \emph{right Winkowski} if $t([m])= P^{\max}$,
  and \emph{Winkowski} if it is both left and right Winkowski.
\end{definition}

Note that starters are precisely discrete right Winkowskis,
terminators are precisely discrete left Winkowskis, and symmetries are
precisely the discrete Winkowskis.

\begin{lemma}
  \label{le:W-glue}
  Let $P= P_1\glue P_2$ nontrivially.
  \begin{itemize}
  \item $P$ is left Winkowski if and only if $P_1$ is;
  \item $P$ is right Winkowski if and only if $P_2$ is;
  \item $P$ is Winkowski if and only if $P_1$ is left Winkowski and
    $P_2$ is right Winkowski.
  \end{itemize}
\end{lemma}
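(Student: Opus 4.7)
The plan is to determine $P^{\min}$ for a nontrivial gluing $P = P_1 \glue P_2$; all three bullets will then follow almost immediately. Since $\src(P) = s_1$ by definition, the left-Winkowski condition $s_1[n_1] = P^{\min}$ will translate directly to a condition on $P_1$ once I establish that the natural injection $P_1 \hookrightarrow P$ identifies $P_1^{\min}$ with $P^{\min}$, and dually for the right-Winkowski condition via $P_2$.

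First I would partition the carrier of $P = (P_1 \sqcup P_2)_{/t_1\equiv s_2}$ into three regions: the ``pure $P_1$'' part $P_1 \setminus t_1[m_1]$, the identified interface $t_1[m_1] \equiv s_2[n_2]$, and the ``pure $P_2$'' part $P_2 \setminus s_2[n_2]$. For points in the first two regions, inspection of the gluing order shows that nothing in $P_2$ lies below them: the clause $(p,1) < (q,2)$ requires $q \notin s_2[n_2]$, which excludes the interface, and no genuine $P_2$-element sits below an $s_2(k)$ because $s_2[n_2] \subseteq P_2^{\min}$. Hence such a point is minimal in $P$ if and only if its preimage is minimal in $P_1$. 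For points in the third region, by contrast, every element of $P_1 \setminus t_1[m_1]$ strictly precedes it by the gluing clause; and by Lemma \ref{le:trivialglue}, nontriviality forces $P_1$ not to be a starter, hence $P_1 \setminus t_1[m_1] \neq \emptyset$, so such a point is never minimal in $P$. The upshot is that $P^{\min}$ is precisely the image of $P_1^{\min}$ under $P_1 \hookrightarrow P$.

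From this, the first bullet is immediate: since $s_1[n_1] \subseteq P_1^{\min}$ automatically, $s_1[n_1] = P^{\min}$ iff $s_1[n_1] = P_1^{\min}$, iff $P_1$ is left Winkowski. The second bullet is proved by the symmetric argument for $P^{\max}$, using that by Lemma \ref{le:trivialglue} nontriviality also forces $P_2$ not to be a terminator, so $P_2 \setminus s_2[n_2] \neq \emptyset$. The third bullet is then the conjunction of the first two. The only delicate point to get right is that elements of $P_1^{\min}$ which happen to lie in $t_1[m_1]$ remain minimal after being identified with points of $s_2[n_2]$; this is immediate from $s_2[n_2] \subseteq P_2^{\min}$, but it is the one spot where the interface identification could in principle interfere with the analysis and so deserves explicit mention.
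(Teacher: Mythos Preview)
Your proposal is correct and takes essentially the same approach as the paper: both arguments establish that $P^{\min}$ is (the image of) $P_1^{\min}$ by using nontriviality to produce a non-interface point of $P_1$ that precedes every pure-$P_2$ element, and then derive all three bullets from this and its dual. Your version is somewhat more explicit than the paper's about why interface elements of $P_1^{\min}$ remain minimal after the identification $t_1\equiv s_2$, but this is a matter of detail rather than a different strategy.
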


\begin{proof}
  We show that $P^{\min}=P_1^{\min}$ and $P^{\max}=P_2^{\max}$;
  the lemma then follows.  By non-triviality there must be $x\in P_1$
  which is not in the target interface.  Now
  $P_1^{\min}\subseteq P^{\min}$ by definition of $\glue$, so assume
  $y\in P^{\min}\setminus P_1^{\min}$.  Then $y\notin P_1$, which
  implies $x<y$ in contradiction to $y\in P^{\min}$.  (Note that
  non-triviality of $P=P_1\glue P_2$ is necessary here: if $P_1$ is a
  starter, we may have $y\notin P_1$ but still $y\in P^{\min}$.)  The
  proof for $P^{\max}=P_2^{\max}$ is symmetric.
\end{proof}

For parallel compositions, it is clear that $P_1\otimes P_2$ is
(left/right) Winkowski if and only if $P_1$ and $P_2$ are.  Our
immediate interest in Winkowski iposets is to speed up generation of
gp-iposets by only considering gp-Winkowskis.  It is clear that
any iposet has a decomposition $P=S\glue W\glue T$ into a starter $S$,
a Winkowski $W$ and a terminator $T$; by the next lemma, this also
holds in the gluing-parallel case.

\begin{lemma}
  \label{le:decomp-SWT}
  Any gp-iposet $P$ has a decomposition $P=S\glue W\glue T$ into a
  starter $S$, a Winkowski $W$ and a terminator $T$ which are all
  gluing-parallel.
\end{lemma}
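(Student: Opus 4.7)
I will reduce the three-piece decomposition to two symmetric two-piece ones. First I will establish \emph{left splitting}: every gp-iposet $P$ admits $P = S \glue L$ with $S$ a gp-starter and $L$ a gp-iposet that is left Winkowski. The symmetric \emph{right splitting} gives $P = R \glue T$ with $R$ a gp-right-Winkowski and $T$ a gp-terminator. Given both, I apply the former to $P$ to obtain $P = S \glue L$, then the latter to $L$ to get $L = W \glue T$. Since $T$ is a terminator, Lemma~\ref{le:trivialglue} makes this second gluing trivial, so $W$ and $L$ have the same underlying poset and the same source interface. Left-Winkowskiness of $L$ then transfers to $W$, which combined with $W$ being right Winkowski makes $W$ a gp-Winkowski, yielding the desired $P = S \glue W \glue T$.

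The two splittings are symmetric, so I focus on the left one, proved by induction on the construction of $P$ as a gp-iposet. The four singleton base cases in $\single4$ admit direct decompositions using only elements of $\single4$. For the gluing step $P = P_1 \glue P_2$, the induction hypothesis gives $P = S_1 \glue L_1 \glue S_2 \glue L_2$, and I group as $S_1 \glue (L_1 \glue S_2 \glue L_2)$; when this outer gluing is nontrivial, Lemma~\ref{le:W-glue} shows the bracketed factor inherits left-Winkowskiness from $L_1$. The trivial sub-cases are handled by re-association: if $L_1$ is a starter (necessarily a symmetry by left-Winkowskiness), it is absorbed into $S_1$, after which $S_1 \glue L_1 \glue S_2$ is a gp-starter and the remaining $L_2$ is already left Winkowski; if $S_2 \glue L_2$ is a terminator, it is absorbed into $L_1$, since a gluing with a terminator on the right preserves both the underlying poset and the source interface, hence left-Winkowskiness.

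For the parallel step $P = P_1 \otimes P_2 = (S_1 \glue L_1) \otimes (S_2 \glue L_2)$, I rewrite using the specialized interchange
\begin{equation*}
(S_1 \glue L_1) \otimes (S_2 \glue L_2) = (S_1 \otimes S_2) \glue (L_1 \otimes L_2).
\end{equation*}
Although the general interchange fails (Remark~\ref{re:interchange}), this instance holds up to isomorphism because $S_1, S_2$ are starters: the target of $S_1 \otimes S_2$ is bijective, so the defining clause $p \notin t_1[m_1]$ for cross relations in the left-hand gluing is vacuous, making the left side equal to $L_1 \otimes L_2$ as a poset. On the right, each trivial gluing $S_i \glue L_i$ yields $L_i$ as a poset, so the right side equals $L_1 \otimes L_2$ as well. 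The interfaces on both sides match through the canonical $\phi_{n,m}$ isomorphisms. Thus $P = (S_1 \otimes S_2) \glue (L_1 \otimes L_2)$ with a gp-starter on the left and a gp-left-Winkowski on the right.

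The main obstacle is the parallel step: verifying the specialized interchange requires carefully unfolding the gluing definition to see that the cross-relation clause contributes nothing on either side, and checking that the interface maps coincide via $\phi_{n,m}$. The trivial-gluing sub-cases in the gluing step are fiddly but follow from the observation that a left-Winkowski starter must be a symmetry, so that absorbing it into an adjacent starter keeps the starter property intact.
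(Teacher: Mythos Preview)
Your proof is correct but proceeds differently from the paper's. The paper inducts directly on $|P|$ and produces the three-factor decomposition $S\glue W\glue T$ in one pass: in the gluing case it takes $P=S_1\glue(W_1\glue T_1\glue S_2\glue W_2)\glue T_2$ and observes that the middle block is Winkowski by Lemma~\ref{le:W-glue}; in the parallel case it invokes Lemma~\ref{le:interchange} to rewrite $(S_1\glue W_1\glue T_1)\otimes(S_2\glue W_2\glue T_2)$ as $(S_1\otimes S_2)\glue(W_1\otimes W_2)\glue(T_1\otimes T_2)$ in one step. You instead factor the problem into two symmetric two-piece splittings (gp-starter $\glue$ gp-left-Winkowski, and dually) and then compose them, exploiting that gluing a terminator on the right is trivial to transfer left-Winkowskiness from $L$ to $W$. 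Your parallel step is more self-contained: rather than appealing to Lemma~\ref{le:interchange}, you verify the needed interchange instance directly from the fact that $S_1\otimes S_2$ has surjective target, so no cross relations arise. This makes your argument independent of the somewhat opaque discreteness condition in Lemma~\ref{le:interchange} (which, as stated, does not obviously cover the cases either proof needs). The paper's route is shorter; yours is more modular and the auxiliary one-sided splittings could be reused elsewhere. Two minor expository points: your references to ``left side'' and ``right side'' when verifying the interchange are swapped relative to the displayed equation, and ``this outer gluing'' in the gluing step really means the gluing $L_1\glue(S_2\glue L_2)$ inside the brackets---but the mathematics is sound.
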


\begin{proof}
  Let $n=|P|$ be the number of points in $P$.  If $n\le 1$, then the
  claim is trivially true as all iposets on $0$ or $1$ points are
  gluing-parallel.  Let $n\ge 2$, assume that the claim is true for
  all iposets with fewer than $n$ points, and let $P$ be
  gluing-parallel.

  If $P= P_1\glue P_2$ nontrivially with $P_1$ and $P_2$
  gluing-parallel, then by the induction hypothesis,
  $P_1= S_1\glue W_1\glue T_1$ and $P_2= S_2\glue W_2\glue T_2$ for
  $S_1$, $S_2$ gp-starters, $W_1$, $W_2$ gp-Winkowski, and $T_1$,
  $T_2$ gp-terminators.  Now
  $P= P_1\glue P_2= S_1\glue( W_1\glue T_1\glue S_2\glue W_2)\glue
  T_2$, and $W_1\glue T_1\glue S_2\glue W_2$ is gluing-parallel
  because all four components are, and is Winkowski because $W_1$ and
  $W_2$ are.

  If $P= P_1\otimes P_2$ nontrivially with $P_1$ and $P_2$
  gluing-parallel, then by the induction hypothesis,
  $P_1= S_1\glue W_1\glue T_1$ and $P_2= S_2\glue W_2\glue T_2$ for
  $S_1$, $S_2$ gp-starters, $W_1$, $W_2$ gp-Winkowskis, and $T_1$,
  $T_2$ gp-terminators.  Now
  \begin{equation*}
    P= P_1\otimes P_2=( S_1\glue W_1\glue T_1)\otimes( S_2\glue
    W_2\glue T_2)=( S_1\otimes S_2)\glue( W_1\otimes W_2)\glue(
    T_1\otimes T_2)
  \end{equation*}
  by Lemma \ref{le:interchange}, $S_1\otimes S_2$ is a gp-starter,
  $W_1\otimes W_2$ is gp-Winkowski, and $T_1\otimes T_2$ is a
  gp-terminator.
\end{proof}

For generating gluing-parallel iposets it is thus sufficient to
generate gp-Winkowskis, gp-starters and gp-terminators.  The next
lemma entails that also in the recursions these are the only classes
we need to consider.

\begin{lemma}
  \label{le:gp-SWT}
  For $P$ a gluing-parallel Winkowski iposet, the following are
  exhaustive:
  \begin{enumerate}
  \item $P=\id_0$ or $P=\id_1$;
  \item $P= P_1\otimes P_2$ nontrivially for $P_1$ and $P_2$
    gp-Winkowski;
  \item \label{en:gp-SWT.glue} $P= P_1\glue P_2$ nontrivially for
    $P_1$ gp-Winkowski or a gp-terminator and $P_2$ gp-Win\-kow\-ski
    or a gp-starter.
  \end{enumerate}
\end{lemma}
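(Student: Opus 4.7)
My plan is a case analysis on how the gp-iposet $P$ arises. The base case $|P|\le 1$ is case~(1): the only Winkowski iposet among $\single4$ is $\id_1$, because Winkowskiness forces both interface maps to be bijective onto the unique point. For $|P|\ge 2$, the inductive definition of gp-iposets provides a nontrivial decomposition $P=A\otimes B$ or $P=A\glue B$ with $A,B$ gp-iposets. The parallel case is case~(2) immediately: parallel composition is a coproduct, so min, max, and source/target interfaces all decompose as disjoint unions, forcing $A$ and $B$ to be gp-Winkowski.

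The gluing case is where the work lies. Assume $P=A\glue B$ nontrivially. By Lemma~\ref{le:W-glue}, $A$ is left Winkowski and $B$ is right Winkowski. Apply Lemma~\ref{le:decomp-SWT} to write $A=S_A\glue W_A\glue T_A$ and $B=S_B\glue W_B\glue T_B$. A key sub-observation is that $A$ left Winkowski forces $S_A$ to be a symmetry: the underlying posets of $A$ and $W_A$ coincide (since $S_A$ and $T_A$ are fully absorbed into the adjacent interface), so $A^{\min}=W_A^{\min}$ has cardinality $\tgt(S_A)=|S_A|$; matching this to the cardinality of $s_A$-image, which is $\src(S_A)$, forces $\src(S_A)=|S_A|$, i.e., $s_{S_A}$ is bijective. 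Symmetrically, $T_B$ is a symmetry. Also by Lemma~\ref{le:trivialglue}, nontriviality of $A\glue B$ means $A$ is not a starter and $B$ is not a terminator.

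Next I dispatch on whether $A$ is gp-Winkowski or a gp-terminator---call this \emph{nice-left}, which is equivalent to $T_A$ or $W_A$ being a symmetry---and dually whether $B$ is \emph{nice-right}. In subcase~(i), if $A$ is nice-left and $B$ is nice-right, take $P_1=A$ and $P_2=B$. In subcase~(ii), if $A$ is not nice-left (so both $T_A$ and $W_A$ are non-symmetric), take $P_1=S_A\glue W_A$ and $P_2=T_A\glue B$: here $P_1$ is gp-Winkowski (since $S_A$ is a symmetry and $W_A$ is Winkowski) and not a starter (since $W_A$ is not a symmetry), and $P_2$ is gp-Winkowski because the non-symmetry of $T_A$ creates cross-order relations that make $(T_A\glue B)^{\min}=T_A=s_{T_A\glue B}$-image, while $(T_A\glue B)^{\max}=B^{\max}=t_B$-image by right-Winkowskiness of $B$. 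In subcase~(iii), if $A$ is nice-left but $B$ is not nice-right (so $S_B$ and $W_B$ are non-symmetric), take $P_1=A\glue S_B$ and $P_2=W_B\glue T_B$ with a dual verification. In every case, associativity of $\glue$ gives $P=P_1\glue P_2$.

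The main technical obstacle is the Winkowski verification for the mixed compositions in subcases (ii) and (iii): one traces through the gluing to identify min and max elements and to show the source/target maps are bijective onto them. The crucial ingredient is that the non-symmetry of $T_A$ (resp.\ $S_B$)---guaranteed by the case hypothesis---introduces cross-order relations that push $B$'s (resp.\ $A$'s) non-interface points out of $(T_A\glue B)^{\min}$ (resp.\ $(A\glue S_B)^{\max}$), so these min (max) sets end up exactly $T_A$ (resp.\ $S_B$), whose interface map is bijective. The three subcases exhaust all possibilities since they partition according to the two independent \emph{nice}-attributes.
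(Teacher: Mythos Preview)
Your proof is correct and follows essentially the same strategy as the paper's: dispatch the base and parallel cases directly, and in the gluing case invoke Lemma~\ref{le:W-glue} together with the SWT decomposition of Lemma~\ref{le:decomp-SWT} on the two factors, then regroup by associativity. The paper absorbs the symmetry $S_A$ (resp.\ $T_B$) into the Winkowski part from the outset, writing $P_1=W_1\glue T_1$ and $P_2=S_2\glue W_2$, and then splits into four cases on whether $W_1$, $W_2$ are identities; your three-case split on the \emph{nice-left}/\emph{nice-right} attributes is an equivalent reorganisation (recall that gp-symmetries are identities by interface consistency), and where the two analyses overlap they sometimes produce different but equally valid regroupings of the same four-fold product.
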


\begin{proof}
  The first two cases are clear.  Otherwise, $P= P_1\glue P_2$
  nontrivially for $P_1$ and $P_2$ gluing-parallel.  By Lemma
  \ref{le:W-glue}, $P_1$ is left Winkowski and $P_2$ right Winkowski,
  and by Lemma \ref{le:decomp-SWT} we can decompose $P_1=W_1\glue T_1$
  and $P_2=S_2\glue W_2$ into gp-starters, gp-Winkowskis and
  gp-terminators.  Then $P=W_1\glue T_1\glue S_2\glue W_2$.  There are
  four cases to consider:
  \begin{enumerate}
  \item If both $W_1$ and $W_2$ are identities, then neither $T_1$ nor
    $S_2$ are (by non-triviality of $P=P_1\glue P_2$), hence
    $P=T_1\glue S_2$ nontrivially.
  \item If $W_1$ is an identity, but $W_2$ is not, then also $T_1$ is
    not an identity.  Now if $S_2$ is an identity, then $P=T_1\glue
    W_2$ nontrivially; otherwise, $T_1\glue S_2$ is Winkowski by Lemma
    \ref{le:W-glue} and $P=(S_1\glue T_2)\glue W_2$.
  \item The case of $W_2$ being an identity but not $W_1$ is
    symmetric.
  \item If neither $W_1$ nor $W_2$ are identities, but $T_1\glue S_2$
    is, then $P=W_1\glue W_2$ nontrivially.  If also $T_1\glue S_2$ is
    not an identity, then $T_1\glue S_2\glue W_2$ is Winkowski by
    Lemma \ref{le:W-glue} and $P=W_1\glue(T_1\glue S_2\glue W_2)$
    nontrivially. \qedhere
  \end{enumerate}
\end{proof}

Denoting by $G_n^\textup{W}$, $G_n^\textup{S}$ and $G_n^\textup{T}$
the subsets of $G_n$ consisting of Winkowskis, starters, respectively
terminators, we have thus shown the following refinement of Lemma
\ref{le:gp-rec}.

\begin{lemma}
  For all $n>1$ and $0\le k,\ell\le n$,
  \begin{multline*}
    G_n^\textup{W}(k, \ell) =%
    \smash[b]{\bigcup_{\substack{1\le p, q<n \\ m=p+q-n \\ 0\le m<p \\
          0\le m<q}}} \big( G_p^\textup{W}(k, m)\cup G_p^\textup{T}(k,
    m) \big) \glue \big( G_q^\textup{W}(m, \ell)\cup G_q^\textup{S}(m,
    \ell) \big)
    \\[1ex]
    \cup \bigcup_{\substack{p+q=n \\ p, q\ge 1 \\ k_1+k_2=k \\
        \ell_1+\ell_2=\ell}} G_p^\textup{W}(k_1, \ell_1)\otimes
    G_q^\textup{W}(k_2, \ell_2).
  \end{multline*}
  \qed
\end{lemma}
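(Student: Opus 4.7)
The plan is to mirror the proof of Lemma~\ref{le:gp-rec} but refine it using Lemmas~\ref{le:W-glue} and~\ref{le:gp-SWT}. As in the earlier proof, I would establish the two inclusions separately.

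For the $\supseteq$ direction, I would take an element from each union on the right and check that it lies in $G_n^\textup{W}(k,\ell)$. For a gluing $R = P_1 \glue P_2$ where $P_1 \in G_p^\textup{W}(k,m) \cup G_p^\textup{T}(k,m)$ and $P_2 \in G_q^\textup{W}(m,\ell) \cup G_q^\textup{S}(m,\ell)$, nontriviality follows from the constraints $0 \le m < p$ and $0 \le m < q$ (as in Lemma~\ref{le:gp-rec}), so Lemma~\ref{le:W-glue} applies; since both Winkowskis and terminators are left Winkowski, $P_1$ is left Winkowski, and since both Winkowskis and starters are right Winkowski, $P_2$ is right Winkowski, giving $R$ Winkowski. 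Point count and interface sizes are immediate. For parallel composition, the fact (noted just before Lemma~\ref{le:decomp-SWT}) that $P_1 \otimes P_2$ is Winkowski iff both $P_1$ and $P_2$ are, together with the definition of $\otimes$ on interfaces, gives membership in $G_n^\textup{W}(k,\ell)$.

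For the $\subseteq$ direction, I would take $R \in G_n^\textup{W}(k,\ell)$ with $n>1$. Since $n > 1$, cases (1) of Lemma~\ref{le:gp-SWT} are excluded, so $R$ is either a nontrivial parallel composition of gp-Winkowskis or a nontrivial gluing $R = P_1 \glue P_2$ with $P_1$ gp-Winkowski or gp-terminator and $P_2$ gp-Winkowski or gp-starter. In the gluing case, writing $|P_1|=p$, $|P_2|=q$ and $m = \tgt(P_1) = \src(P_2)$, the relation $m = p+q-n$ follows since gluing identifies $m$ points, and nontriviality (together with $P_1$ not being a starter and $P_2$ not being a terminator, which is forced by their types in case (3)) gives $0 \le m < p$ and $0 \le m < q$, using Lemma~\ref{le:trivialglue}. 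The interface bookkeeping $k_1+k_2=k$, $\ell_1+\ell_2=\ell$ in the parallel case is by definition of $\otimes$.

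The main obstacle, which is really the only subtle point, is making sure that in case~(\ref{en:gp-SWT.glue}) of Lemma~\ref{le:gp-SWT} the factors $P_1$ and $P_2$ satisfy the strict inequalities $m < p$ and $m < q$ that index the right-hand union. This needs a careful appeal to Lemma~\ref{le:trivialglue}: if $P_1$ is a gp-terminator then $P_2$ cannot also be a starter (else $R$ would be discrete and nontriviality would fail by~\ref{le:trivialglue}), and conversely; while if $P_1$ is Winkowski and $\tgt(P_1) = |P_1|$ then $P_1$ would be a discrete starter, contradicting the assumption that $P_1$ is (nontrivially) Winkowski of size $p > m$. Once these boundary cases are handled, both inclusions fit cleanly together.
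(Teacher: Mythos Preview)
Your overall approach is correct and is exactly what the paper intends: the lemma carries an immediate \qed\ because it follows from Lemma~\ref{le:gp-SWT} together with the bookkeeping of Lemma~\ref{le:gp-rec}, just as you outline in your first three paragraphs.

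Your final paragraph, however, contains a false claim and an unnecessary detour. It is \emph{not} true that ``if $P_1$ is a gp-terminator then $P_2$ cannot also be a starter'': by Lemma~\ref{le:trivialglue} a gluing is trivial only when $P_1$ is a starter or $P_2$ is a terminator, so a (non-symmetry) terminator glued with a (non-symmetry) starter is perfectly nontrivial, and the result is not even discrete once $m<p$ and $m<q$, since points of $P_1$ outside $t_1([m])$ become strictly below points of $P_2$ outside $s_2([m])$. Likewise, your phrase ``forced by their types in case~(3)'' is inaccurate: a symmetry is simultaneously Winkowski, starter, and terminator, so the type alternatives in Lemma~\ref{le:gp-SWT} do not by themselves exclude $P_1$ being a starter.

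The clean, uniform argument for $m<p$ is: suppose $m=p$. Then $t_1$ is bijective. If $P_1$ is a terminator it is already discrete with $s_1$ bijective, hence a symmetry; if $P_1$ is Winkowski then $P_1^{\max}=t_1([m])=P_1$ forces $P_1$ discrete, and $s_1([k])=P_1^{\min}=P_1$ makes $s_1$ bijective, again a symmetry. Either way $P_1$ is a starter, contradicting nontriviality via Lemma~\ref{le:trivialglue}. The argument for $m<q$ is symmetric. With this correction your proof goes through.
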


\begin{table}
  \centering
  \caption{Different types of (i)posets on $n$ points: posets;
    iposets; gp-iposets; Winkowski iposets; interface consistent
    Winkowskis; gp-Winkowskis}
  \label{ta:wink}
  \smallskip
  \begin{tabular}{r|rrrrrr}
    $n$ & $\PP(n)$ & $\IP(n)$ & $\GPI(n)$ & $\textsf{WIP}(n)$ &
    $\textsf{ICW}(n)$ & $\textsf{GPWI}(n)$ \\\hline
    0 & 1 & 1 & 1 & 1 & 1 & 1 \\
    1 & 1 & 4 & 4 & 1 & 1 & 1 \\
    2 & 2 & 17 & 16 & 3 & 2 & 2 \\
    3 & 5 & 86 & 74 & 13 & 8 & 8 \\
    4 & 16 & 532 & 419 & 75 & 43 & 42 \\
    5 & 63 & 4068 & 2980 & 555 & 311 & 284 \\
    6 & 318 & 38.933 & 26.566 & 5230 & 3018 & 2430 \\
    7 & 2045 & 474.822 & 289.279 & 63.343 & 39.196 & 25.417 \\
    8 & 16.999 & 7.558.620 & 3.726.311 & 1.005.871 & 682.362 & 314.859 \\
    9 & 183.231 & & & & & 4.509.670 \\[1ex]
    EIS & 112 & 331158 & 331159
  \end{tabular}
\end{table}

Note that in order to find forbidden substructures, it is not enough
to generate $G_n^\textup{W}(0,0)$ as was the case for general iposets;
indeed $G_n^\textup{W}(0,0)=\emptyset$ for $n\ge 1$, given that the
number of interfaces for Winkowski iposets is a structural property
determined by the underlying posets.  Generating $G_7^\textup{W}$ took
about 4 seconds and $G_8^\textup{W}$ ca.\ 12 minutes on a standard
laptop (compare this with the 4 minutes for $G_7$ and 300 hours for
$G_8$).  Generating $G_9^\textup{W}$ took 79 hours on one of the
machines mentioned in footnote \ref{fn:norway}.  Table \ref{ta:wink}
shows the numbers of Winkowskis and gp-Winkowskis on $n$ points up to
isomorphism, and Tables \ref{ta:wip123split} to \ref{ta:wip9split} in
the appendix show the split into sources and targets.

\newcommand{\etalchar}[1]{$^{#1}$}
\newcommand{\Afirst}[1]{#1} \newcommand{\afirst}[1]{#1}

\clearpage
\appendix
\setcounter{table}0
\renewcommand{\thetable}{A.\arabic{table}}

\begin{table}[tbp]
  \centering
  \caption{Numbers of iposets on 1, 2 and 3 points split by number of
    sources and targets.}
  \label{ta:ip123split}
  \smallskip
  \begin{tabular}[t]{r|rr}
    $\IP(1)$\!\! & 0 & 1 \\\hline
    0 & 1 & 1 \\
    1 & & 1
  \end{tabular}
  \qquad
  \begin{tabular}[t]{r|rrr}
    $\IP(2)$\!\! & 0 & 1 & 2 \\\hline
    0 & 2 & 2 & 1 \\
    1 & & 3 & 2 \\
    2 & & & 2 \\
  \end{tabular}
  \qquad
  \begin{tabular}[t]{r|rrr}
    $\ICI(2)$\!\! & 0 & 1 & 2 \\\hline
    0 & 2 & 2 & 1 \\
    1 & & 3 & 2 \\
    2 & & & 1
  \end{tabular}

  \medskip
  \begin{tabular}[t]{r|rrrr}
    $\IP(3)$\!\! & 0 & 1 & 2 & 3 \\\hline
    0 & 5 & 6 & 4 & 1 \\
    1 & & 9 & 8 & 3 \\
    2 & & & 10 & 6 \\
    3 & & & & 6 \\
  \end{tabular}
  \qquad
  \begin{tabular}[t]{r|rrrr}
    $\ICI(3)$ & 0 & 1 & 2 & 3 \\\hline
    0 & 5 & 6 & 4 & 1 \\
    1 & & 9 & 8 & 3 \\
    2 & & & 9 & 3 \\
    3 & & & & 1
  \end{tabular}
\end{table}

\begin{table}[tbp]
  \centering
  \caption{Numbers of iposets on 4 points split by number of sources
    and targets.}
  \label{ta:ip4split}
  \smallskip
\begin{tabular}[t]{r|rrrrr}
$\IP(4)$\!\! & 0 & 1 & 2 & 3 & 4 \\\hline
0 & 16 & 22 & 19 & 8 & 1 \\
1 & & 36 & 37 & 20 & 4 \\
2 & & & 48 & 36 & 12 \\
3 & & & & 42 & 24 \\
4 & & & & & 24 \\
\end{tabular}
\qquad
\begin{tabular}[t]{r|rrrrr}
$\ICI(4)$\!\! & 0 & 1 & 2 & 3 & 4 \\\hline
0 & 16 & 22 & 19 & 8 & 1 \\
1 & & 36 & 37 & 20 & 4 \\
2 & & & 46 & 30 & 6 \\
3 & & & & 19 & 4 \\
4 & & & & & 1 \\
\end{tabular}

\medskip
\begin{tabular}[t]{r|rrrrr}
$\GPI(4)$\!\! & 0 & 1 & 2 & 3 & 4 \\\hline
0 & 16 & 22 & 19 & 8 & 1 \\
1 & & 36 & 37 & 20 & 4 \\
2 & & & 45 & 30 & 6 \\
3 & & & & 19 & 4 \\
4 & & & & & 1 \\
\end{tabular}
\end{table}

\begin{table}[tbp]
  \centering
  \caption{Numbers of iposets on 5 points split by number of sources
    and targets.}
  \label{ta:ip5split}
  \smallskip
\begin{tabular}[t]{r|rrrrrr}
$\IP(5)$\!\! & 0 & 1 & 2 & 3 & 4 & 5 \\\hline
0 & 63 & 101 & 106 & 63 & 16 & 1 \\
1 & & 180 & 214 & 148 & 48 & 5 \\
2 & & & 295 & 250 & 112 & 20 \\
3 & & & & 282 & 192 & 60 \\
4 & & & & & 216 & 120 \\
5 & & & & & & 120 \\
\end{tabular}
\qquad
\begin{tabular}[t]{r|rrrrrr}
$\ICI(5)$\!\! & 0 & 1 & 2 & 3 & 4 & 5 \\\hline
0 & 63 & 101 & 106 & 63 & 16 & 1 \\
1 & & 180 & 214 & 148 & 48 & 5 \\
2 & & & 290 & 232 & 88 & 10 \\
3 & & & & 209 & 80 & 10 \\
4 & & & & & 33 & 5 \\
5 & & & & & & 1 \\
\end{tabular}

\medskip
\begin{tabular}[t]{r|rrrrrr}
$\GPI(5)$\!\! & 0 & 1 & 2 & 3 & 4 & 5 \\\hline
0 & 63 & 101 & 106 & 62 & 16 & 1 \\
1 & & 180 & 214 & 146 & 48 & 5 \\
2 & & & 281 & 220 & 88 & 10 \\
3 & & & & 198 & 80 & 10 \\
4 & & & & & 33 & 5 \\
5 & & & & & & 1 \\
\end{tabular}
\end{table}

\begin{table}[tbp]
  \centering
  \caption{Numbers of iposets on 6 points split by number of sources
    and targets.}
  \label{ta:ip6split}
  \smallskip
\begin{tabular}[t]{r|rrrrrrr}
$\IP(6)$\!\! & 0 & 1 & 2 & 3 & 4 & 5 & 6 \\\hline
0 & 318 & 576 & 720 & 552 & 217 & 32 & 1 \\
1 & & 1131 & 1536 & 1303 & 589 & 112 & 6 \\
2 & & & 2305 & 2221 & 1212 & 320 & 30 \\
3 & & & & 2549 & 1812 & 720 & 120 \\
4 & & & & & 1872 & 1200 & 360 \\
5 & & & & & & 1320 & 720 \\
6 & & & & & & & 720 \\
\end{tabular}

\medskip
\begin{tabular}[t]{r|rrrrrrr}
$\ICI(6)$\!\! & 0 & 1 & 2 & 3 & 4 & 5 & 6 \\\hline
0 & 318 & 576 & 720 & 552 & 217 & 32 & 1 \\
1 & & 1131 & 1536 & 1303 & 589 & 112 & 6 \\
2 & & & 2289 & 2155 & 1098 & 240 & 15 \\
3 & & & & 2245 & 1242 & 280 & 20 \\
4 & & & & & 690 & 170 & 15 \\
5 & & & & & & 51 & 6 \\
6 & & & & & & & 1 \\
\end{tabular}

\medskip
\begin{tabular}[t]{r|rrrrrrr}
$\GPI(6)$\!\! & 0 & 1 & 2 & 3 & 4 & 5 & 6 \\\hline
0 & 313 & 565 & 703 & 523 & 205 & 32 & 1 \\
1 & & 1104 & 1493 & 1235 & 561 & 112 & 6 \\
2 & & & 2146 & 1931 & 993 & 240 & 15 \\
3 & & & & 1911 & 1092 & 280 & 20 \\
4 & & & & & 644 & 170 & 15 \\
5 & & & & & & 51 & 6 \\
6 & & & & & & & 1 \\
\end{tabular}
\end{table}

\begin{table}[tbp]
  \centering
  \caption{Numbers of iposets on 7 points split by number of sources
    and targets.}
  \label{ta:ip7split}
  \smallskip
\begin{tabular}[t]{r|rrrrrrrr}
$\IP(7)$\!\! & 0 & 1 & 2 & 3 & 4 & 5 & 6 & 7 \\\hline
0 & 2045 & 4162 & 6026 & 5692 & 3074 & 771 & 64 & 1 \\
1 & & 8945 & 13756 & 13925 & 8210 & 2352 & 256 & 7 \\
2 & & & 22664 & 24956 & 16465 & 5654 & 864 & 42 \\
3 & & & & 30610 & 23572 & 10440 & 2400 & 210 \\
4 & & & & & 22880 & 14400 & 5280 & 840 \\
5 & & & & & & 14040 & 8640 & 2520 \\
6 & & & & & & & 9360 & 5040 \\
7 & & & & & & & & 5040 \\
\end{tabular}

\medskip
\begin{tabular}[t]{r|rrrrrrrr}
$\ICI(7)$\!\! & 0 & 1 & 2 & 3 & 4 & 5 & 6 & 7 \\\hline
0 & 2045 & 4162 & 6026 & 5692 & 3074 & 771 & 64 & 1 \\
1 & & 8945 & 13756 & 13925 & 8210 & 2352 & 256 & 7 \\
2 & & & 22601 & 24653 & 15829 & 5024 & 624 & 21 \\
3 & & & & 29054 & 20072 & 6760 & 880 & 35 \\
4 & & & & & 14489 & 4870 & 700 & 35 \\
5 & & & & & & 1777 & 312 & 21 \\
6 & & & & & & & 73 & 7 \\
7 & & & & & & & & 1 \\
\end{tabular}

\medskip
\begin{tabular}[t]{r|rrrrrrrr}
$\GPI(7)$\!\! & 0 & 1 & 2 & 3 & 4 & 5 & 6 & 7 \\\hline
0 & 1903 & 3813 & 5423 & 4878 & 2563 & 680 & 64 & 1 \\
1 & & 8056 & 12179 & 11811 & 6865 & 2110 & 256 & 7 \\
2 & & & 19129 & 19567 & 12305 & 4246 & 624 & 21 \\
3 & & & & 21295 & 14420 & 5433 & 880 & 35 \\
4 & & & & & 10439 & 4112 & 700 & 35 \\
5 & & & & & & 1647 & 312 & 21 \\
6 & & & & & & & 73 & 7 \\
7 & & & & & & & & 1 \\
\end{tabular}
\end{table}

\begin{table}[tbp]
  \centering
  \caption{Numbers of iposets on 8 points split by number of sources
    and targets.}
  \label{ta:ip8split}
  \smallskip
\begin{tabular}[t]{r|rrrrrrrrr}
$\IP(8)$\!\! & 0 & 1 & 2 & 3 & 4 & 5 & 6 & 7 & 8 \\\hline
0 & 16999 & 38280 & 63088 & 70946 & 49255 & 18152 & 2809 & 128 & 1 \\
1 & & 89699 & 154451 & 182680 & 134680 & 53651 & 9451 & 576 & 8 \\
2 & & & 279685 & 350957 & 278197 & 122505 & 25810 & 2240 & 56 \\
3 & & & & 472927 & 410905 & 207923 & 56322 & 7392 & 336 \\
4 & & & & & 406232 & 253640 & 96600 & 20160 & 1680 \\
5 & & & & & & 218200 & 126120 & 43680 & 6720 \\
6 & & & & & & & 118080 & 70560 & 20160 \\
7 & & & & & & & & 75600 & 40320 \\
8 & & & & & & & & & 40320 \\
\end{tabular}

\medskip
\begin{tabular}[t]{r|rrrrrrrrr}
$\ICI(8)$\!\! & 0 & 1 & 2 & 3 & 4 & 5 & 6 & 7 & 8 \\\hline
0 & 16999 & 38280 & 63088 & 70946 & 49255 & 18152 & 2809 & 128 & 1 \\
1 & & 89699 & 154451 & 182680 & 134680 & 53651 & 9451 & 576 & 8 \\
2 & & & 279367 & 349229 & 273877 & 116985 & 22555 & 1568 & 28 \\
3 & & & & 463000 & 384873 & 173073 & 34857 & 2576 & 56 \\
4 & & & & & 334532 & 152970 & 30605 & 2520 & 70 \\
5 & & & & & & 68080 & 14711 & 1484 & 56 \\
6 & & & & & & & 3854 & 518 & 28 \\
7 & & & & & & & & 99 & 8 \\
8 & & & & & & & & & 1 \\
\end{tabular}

\medskip
\begin{tabular}[t]{r|rrrrrrrrr}
$\GPI(8)$\!\! & 0 & 1 & 2 & 3 & 4 & 5 & 6 & 7 & 8 \\\hline
0 & 13943 & 30333 & 48089 & 50187 & 32790 & 12348 & 2251 & 128 & 1 \\
1 & & 68571 & 113701 & 125539 & 88295 & 36791 & 7789 & 576 & 8 \\
2 & & & 193330 & 221192 & 164078 & 73774 & 17438 & 1568 & 28 \\
3 & & & & 263828 & 206161 & 98655 & 25233 & 2576 & 56 \\
4 & & & & & 169476 & 85192 & 22937 & 2520 & 70 \\
5 & & & & & & 44362 & 12173 & 1484 & 56 \\
6 & & & & & & & 3559 & 518 & 28 \\
7 & & & & & & & & 99 & 8 \\
8 & & & & & & & & & 1 \\
\end{tabular}
\end{table}

\begin{table}[tbp]
  \centering
  \caption{Numbers of Winkowski iposets on 1, 2 and 3 points split by
    number of sources and targets.}
  \label{ta:wip123split}
  \smallskip
  \begin{tabular}[t]{r|rr}
    $\textsf{WIP}(1)$\!\! & 0 & 1 \\\hline
    0 & 0 & 0 \\
    1 & & 1 \\
  \end{tabular}
  \qquad
  \begin{tabular}[t]{r|rrr}
    $\textsf{WIP}(2)$\!\! & 0 & 1 & 2 \\\hline
    0 & 0 & 0 & 0 \\
    1 & & 1 & 0 \\
    2 & & & 2 \\
  \end{tabular}
  \qquad
  \begin{tabular}[t]{r|rrr}
    $\textsf{GPWI}(2)$\!\! & 0 & 1 & 2 \\\hline
    0 & 0 & 0 & 0 \\
    1 & & 1 & 0 \\
    2 & & & 1 \\
  \end{tabular}

  \medskip
  \begin{tabular}[t]{r|rrrr}
    $\textsf{WIP}(3)$\!\! & 0 & 1 & 2 & 3 \\\hline
    0 & 0 & 0 & 0 & 0 \\
    1 & & 1 & 1 & 0 \\
    2 & & & 4 & 0 \\
    3 & & & & 6 \\
  \end{tabular}
  \qquad
  \begin{tabular}[t]{r|rrrr}
    $\textsf{GPWI}(3)$\!\! & 0 & 1 & 2 & 3 \\\hline
    0 & 0 & 0 & 0 & 0 \\
    1 & & 1 & 1 & 0 \\
    2 & & & 4 & 0 \\
    3 & & & & 1 \\
  \end{tabular}
\end{table}

\begin{table}[tbp]
  \centering
  \caption{Numbers of Winkowski iposets on 4 points split by number of
    sources and targets.}
  \label{ta:wip4split}
  \begin{tabular}[t]{r|rrrrr}
    $\textsf{WIP}(4)$\!\! & 0 & 1 & 2 & 3 & 4 \\\hline
    0 & 0 & 0 & 0 & 0 & 0 \\
    1 & & 2 & 3 & 1 & 0 \\
    2 & & & 11 & 6 & 0 \\
    3 & & & & 18 & 0 \\
    4 & & & & & 24 \\
  \end{tabular}
  \qquad
  \begin{tabular}[t]{r|rrrrr}
    $\textsf{GPWI}(4)$\!\! & 0 & 1 & 2 & 3 & 4 \\\hline
    0 & 0 & 0 & 0 & 0 & 0 \\
    1 & & 2 & 3 & 1 & 0 \\
    2 & & & 10 & 6 & 0 \\
    3 & & & & 9 & 0 \\
    4 & & & & & 1 \\
  \end{tabular}
\end{table}

\begin{table}[tbp]
  \centering
  \caption{Numbers of Winkowski iposets on 5 points split by number of
    sources and targets.}
  \label{ta:wip5split}
  \smallskip
  \begin{tabular}[t]{r|rrrrrr}
    $\textsf{WIP}(5)$\!\! & 0 & 1 & 2 & 3 & 4 & 5 \\\hline
    0 & 0 & 0 & 0 & 0 & 0 & 0 \\
    1 & & 5 & 11 & 7 & 1 & 0 \\
    2 & & & 41 & 43 & 8 & 0 \\
    3 & & & & 81 & 36 & 0 \\
    4 & & & & & 96 & 0 \\
    5 & & & & & & 120 \\
  \end{tabular}
  \qquad
  \begin{tabular}[t]{r|rrrrrr}
    $\textsf{GPWI}(5)$\!\! & 0 & 1 & 2 & 3 & 4 & 5 \\\hline
    0 & 0 & 0 & 0 & 0 & 0 & 0 \\
    1 & & 5 & 11 & 7 & 1 & 0 \\
    2 & & & 39 & 36 & 8 & 0 \\
    3 & & & & 61 & 18 & 0 \\
    4 & & & & & 16 & 0 \\
    5 & & & & & & 1 \\
  \end{tabular}
\end{table}

\begin{table}[tbp]
  \centering
  \caption{Numbers of Winkowski iposets on 6 points split by number of
    sources and targets.}
  \label{ta:wip6split}
  \begin{tabular}[t]{r|rrrrrrr}
    $\textsf{WIP}(6)$\!\! & 0 & 1 & 2 & 3 & 4 & 5 & 6 \\\hline
    0 & 0 & 0 & 0 & 0 & 0 & 0 & 0 \\
    1 & & 16 & 47 & 47 & 15 & 1 & 0 \\
    2 & & & 200 & 285 & 135 & 10 & 0 \\
    3 & & & & 598 & 408 & 60 & 0 \\
    4 & & & & & 600 & 240 & 0 \\
    5 & & & & & & 600 & 0 \\
    6 & & & & & & & 720 \\
  \end{tabular}

  \medskip
  \begin{tabular}[t]{r|rrrrrrr}
    $\textsf{GPWI}(6)$\!\! & 0 & 1 & 2 & 3 & 4 & 5 & 6 \\\hline
    0 & 0 & 0 & 0 & 0 & 0 & 0 & 0 \\
    1 & & 16 & 47 & 46 & 15 & 1 & 0 \\
    2 & & & 190 & 238 & 102 & 10 & 0 \\
    3 & & & & 406 & 256 & 30 & 0 \\
    4 & & & & & 222 & 40 & 0 \\
    5 & & & & & & 25 & 0 \\
    6 & & & & & & & 1 \\
  \end{tabular}
\end{table}

\begin{table}[tbp]
  \centering
  \caption{Numbers of Winkowski iposets on 7 points split by number of
    sources and targets.}
  \label{ta:wip7split}
  \begin{tabular}[t]{r|rrrrrrrr}
    $\textsf{WIP}(7)$\!\! & 0 & 1 & 2 & 3 & 4 & 5 & 6 & 7 \\\hline
    0 & 0 & 0 & 0 & 0 & 0 & 0 & 0 & 0 \\
    1 & & 63 & 243 & 343 & 185 & 31 & 1 & 0 \\
    2 & & & 1203 & 2198 & 1609 & 391 & 12 & 0 \\
    3 & & & & 5323 & 5185 & 1605 & 90 & 0 \\
    4 & & & & & 6808 & 3720 & 480 & 0 \\
    5 & & & & & & 4800 & 1800 & 0 \\
    6 & & & & & & & 4320 & 0 \\
    7 & & & & & & & & 5040 \\
  \end{tabular}

  \medskip
  \begin{tabular}[t]{r|rrrrrrrr}
    $\textsf{GPWI}(7)$\!\! & 0 & 1 & 2 & 3 & 4 & 5 & 6 & 7 \\\hline
    0 & 0 & 0 & 0 & 0 & 0 & 0 & 0 & 0 \\
    1 & & 63 & 239 & 318 & 173 & 31 & 1 & 0 \\
    2 & & & 1096 & 1727 & 1129 & 260 & 12 & 0 \\
    3 & & & & 3284 & 2699 & 838 & 45 & 0 \\
    4 & & & & & 2864 & 1112 & 80 & 0 \\
    5 & & & & & & 595 & 75 & 0 \\
    6 & & & & & & & 36 & 0 \\
    7 & & & & & & & & 1 \\
  \end{tabular}
\end{table}

\begin{table}[tbp]
  \centering
  \caption{Numbers of Winkowski iposets on 8 points split by number of
    sources and targets.}
  \label{ta:wip8split}
  \begin{tabular}[t]{r|rrrrrrrrr}
    $\textsf{WIP}(8)$\!\! & 0 & 1 & 2 & 3 & 4 & 5 & 6 & 7 & 8 \\\hline
    0 & 0 & 0 & 0 & 0 & 0 & 0 & 0 & 0 & 0 \\
    1 & & 318 & 1533 & 2891 & 2319 & 707 & 63 & 1 & 0 \\
    2 & & & 8895 & 20195 & 20222 & 8333 & 1099 & 14 & 0 \\
    3 & & & & 56783 & 71835 & 37396 & 5688 & 126 & 0 \\
    4 & & & & & 112751 & 72140 & 17580 & 840 & 0 \\
    5 & & & & & & 74000 & 35400 & 4200 & 0 \\
    6 & & & & & & & 42120 & 15120 & 0 \\
    7 & & & & & & & & 35280 & 0 \\
    8 & & & & & & & & & 40320 \\
  \end{tabular}

  \medskip
  \begin{tabular}[t]{r|rrrrrrrrr}
    $\textsf{GPWI}(8)$\!\! & 0 & 1 & 2 & 3 & 4 & 5 & 6 & 7 & 8 \\\hline
    0 & 0 & 0 & 0 & 0 & 0 & 0 & 0 & 0 & 0 \\
    1 & & 313 & 1432 & 2413 & 1856 & 616 & 63 & 1 & 0 \\
    2 & & & 7402 & 13942 & 12152 & 4736 & 626 & 14 & 0 \\
    3 & & & & 29702 & 30062 & 14150 & 2433 & 63 & 0 \\
    4 & & & & & 36058 & 20366 & 4230 & 140 & 0 \\
    5 & & & & & & 13812 & 3507 & 175 & 0 \\
    6 & & & & & & & 1316 & 126 & 0 \\
    7 & & & & & & & & 49 & 0 \\
    8 & & & & & & & & & 1 \\
  \end{tabular}
\end{table}

\begin{table}[tbp]
  \centering
  \caption{Numbers of gp-Winkowski iposets on 9 points split by number of
    sources and targets.}
  \label{ta:wip9split}
  \smallskip
  \begin{tabular}[t]{r|rrrrrrrrrr}
    $\textsf{GPWI}(9)$\!\! & 0 & 1 & 2 & 3 & 4 & 5 & 6 & 7 & 8 & 9 \\\hline
    0 & 0 & 0 & 0 & 0 & 0 & 0 & 0 & 0 & 0 & 0 \\
    1 & & 1903 & 10109 & 20397 & 20173 & 9935 & 2123 & 127 & 1 & 0 \\
    2 & & & 57949 & 126041 & 135862 & 74501 & 18507 & 1456 & 16 & 0 \\
    3 & & & & 298002 & 355788 & 221772 & 65086 & 6585 & 84 & 0 \\
    4 & & & & & 478218 & 340355 & 115612 & 13988 & 224 & 0 \\
    5 & & & & & & 276343 & 108612 & 15337 & 350 & 0 \\
    6 & & & & & & & 49569 & 8960 & 336 & 0 \\
    7 & & & & & & & & 2555 & 196 & 0 \\
    8 & & & & & & & & & 64 & 0 \\
    9 & & & & & & & & & & 1 \\
  \end{tabular}
\end{table}


\begin{thebibliography}{HMSW11}

\bibitem[BM02]{DBLP:journals/order/BrinkmannM02}
Gunnar Brinkmann and Brendan~D. McKay.
\newblock Posets on up to 16 points.
\newblock {\em Order}, 19(2):147--179, 2002.

\bibitem[CDS20]{CranchDS20}
James Cranch, Simon Doherty, and Georg Struth.
\newblock Convolution and concurrency.
\newblock \url{https://arxiv.org/abs/2002.02321}, 2020.

\bibitem[CFJ{\etalchar{+}}21]{DBLP:conf/RelMiCS/CalkFJSZ21}
Cameron Calk, Uli Fahrenberg, Christian Johansen, Georg Struth, and Krzysztof
  Ziemianski.
\newblock $\ell r$-{M}ultisemigroups, modal quantales and the origin of
  locality.
\newblock In Uli Fahrenberg, Mai Gehrke, Luigi Santocanale, and Michael Winter,
  editors, {\em RAMiCS}, volume 13027 of {\em {Lect. Notes Comput. Sci.}},
  pages 90--107. {Springer-Verlag}, 2021.

\bibitem[FBS{\etalchar{+}}21]{Graphs21}
James Fairbanks, Mathieu Besançon, Simon Schölly, Júlio Hoffiman, Nick
  Eubank, and Stefan Karpinski.
\newblock Juliagraphs/graphs.jl: an optimized graphs package for the julia
  programming language, 2021.
\newblock \url{https://github.com/JuliaGraphs/Graphs.jl/}.

\bibitem[Fis70]{journals/mpsy/Fishburn70}
Peter~C. Fishburn.
\newblock Intransitive indifference with unequal indifference intervals.
\newblock {\em {Journal of Mathematical Psychology}}, 7(1):144--149, 1970.

\bibitem[FJST20]{DBLP:conf/RelMiCS/FahrenbergJST20}
Uli Fahrenberg, Christian Johansen, Georg Struth, and Ratan~Bahadur Thapa.
\newblock Generating posets beyond {N}.
\newblock In Uli Fahrenberg, Peter Jipsen, and Michael Winter, editors, {\em
  RAMiCS}, volume 12062 of {\em {Lect. Notes Comput. Sci.}}, pages 82--99,
  Heidelberg, 2020. {Springer-Verlag}.

\bibitem[FJSZ21a]{Chantale}
Uli Fahrenberg, Christian Johansen, Georg Struth, and Krzysztof Ziemia{\'n}ski.
\newblock $\ell r$-{M}ultisemigroups and modal convolution algebras.
\newblock \url{https://arxiv.org/abs/2105.00188}, 2021.

\bibitem[FJSZ21b]{Hdalang}
Uli Fahrenberg, Christian Johansen, Georg Struth, and Krzysztof Ziemia{\'n}ski.
\newblock Languages of higher-dimensional automata.
\newblock {\em {Math. Struct. Comput. Sci.}}, 31(5):575--613, 2021.

\bibitem[FJSZ21c]{BeyondN2}
Uli Fahrenberg, Christian Johansen, Georg Struth, and Krzysztof Ziemia{\'n}ski.
\newblock Posets with interfaces for concurrent {Kleene} algebra.
\newblock \url{https://arxiv.org/abs/2106.10895}, 2021.

\bibitem[Gis88]{DBLP:journals/tcs/Gischer88}
Jay~L. Gischer.
\newblock The equational theory of pomsets.
\newblock {\em {Theoretical Computer Science}}, 61:199--224, 1988.

\bibitem[Gra81]{DBLP:journals/fuin/Grabowski81}
J.~Grabowski.
\newblock On partial languages.
\newblock {\em {Fundamentae Informatica}}, 4(2):427, 1981.

\bibitem[HMSW11]{DBLP:journals/jlp/HoareMSW11}
Tony Hoare, Bernhard M{\"{o}}ller, Georg Struth, and Ian Wehrman.
\newblock Concurrent {K}leene algebra and its foundations.
\newblock {\em {Journal of Logic and Algebraic Programming}}, 80(6):266--296,
  2011.

\bibitem[JK93]{DBLP:journals/tcs/JanickiK93}
Ryszard Janicki and Maciej Koutny.
\newblock Structure of concurrency.
\newblock {\em {Theoretical Computer Science}}, 112(1):5--52, 1993.

\bibitem[JY17]{DBLP:journals/iandc/JanickiY17}
Ryszard Janicki and Xiang Yin.
\newblock Modeling concurrency with interval traces.
\newblock {\em {Information and Computation}}, 253:78--108, 2017.

\bibitem[Lam78]{DBLP:journals/cacm/Lamport78}
Leslie Lamport.
\newblock Time, clocks, and the ordering of events in a distributed system.
\newblock {\em Commun. {ACM}}, 21(7):558--565, 1978.

\bibitem[Mil89]{book/Milner89}
Robin Milner.
\newblock {\em Communication and Concurrency}.
\newblock Prentice Hall, Upper Saddle River, 1989.

\bibitem[Pra86]{Pratt86pomsets}
Vaughan~R. Pratt.
\newblock Modeling concurrency with partial orders.
\newblock {\em J. Parallel Programming}, 15(1):33--71, 1986.

\bibitem[vG06a]{DBLP:journals/tcs/Glabbeek06}
Rob~J. van Glabbeek.
\newblock \afirst{On} the expressiveness of higher dimensional automata.
\newblock {\em {Theoretical Computer Science}}, 356(3):265--290, 2006.
\newblock See also~\cite{DBLP:journals/tcs/Glabbeek06a}.

\bibitem[vG06b]{DBLP:journals/tcs/Glabbeek06a}
Rob~J. van Glabbeek.
\newblock Erratum to ``{O}n the expressiveness of higher dimensional automata''
  \cite{DBLP:journals/tcs/Glabbeek06}.
\newblock {\em {Theoretical Computer Science}}, 368(1-2):168--194, 2006.

\bibitem[Vog92]{DBLP:books/sp/Vogler92}
Walter Vogler.
\newblock {\em Modular Construction and Partial Order Semantics of Petri Nets},
  volume 625 of {\em Lecture Notes in Computer Science}.
\newblock Springer, 1992.

\bibitem[VTL82]{DBLP:journals/siamcomp/ValdesTL82}
Jacobo Valdes, Robert~Endre Tarjan, and Eugene~L. Lawler.
\newblock The recognition of series parallel digraphs.
\newblock {\em {{SIAM} Journal on Computing}}, 11(2):298--313, 1982.

\bibitem[Win77]{DBLP:journals/ipl/Winkowski77}
J{\'{o}}zef Winkowski.
\newblock An algebraic characterization of the behaviour of non-sequential
  systems.
\newblock {\em Information Processing Letters}, 6(4):105--109, 1977.

\end{thebibliography}
\end{document}